\newtheorem{teo}{Theorem}[section]
\newtheorem{lem}[teo]{Lemma}
\newtheorem{cor}[teo]{Corollary}
\newtheorem{prop}[teo]{Proposition}
\newcommand{\medint}{-\kern  -,395cm\int}
\theoremstyle{remark}
\newtheorem{oss}[teo]{Remark}
\theoremstyle{definition}
\newtheorem{defi}[teo]{Definition}
\theoremstyle{definition}
\newcommand{\wstar}{{\hat w}}
\newcommand{\bbR}{{\mathbb{R}}}
\newcommand{\bbN}{{\mathbb{N}}}
\newcommand{\R}{\bbR}
\def\de{{\rm d}}
\def\cleardoublepage{\clearpage\if@twoside \ifodd\c@page\else
\hbox{}
\thispagestyle{empty}
\newpage
\if@twocolumn\hbox{}\newpage\fi\fi\fi}
\title{Relaxation for degenerate nonlinear functionals in the onedimensional case}
\author[V.~Chiad\`o Piat]{Valeria Chiad\`o Piat}
\address{Dipartimento di scienze Matematiche Giuseppe Lagrange, Corso duca degli abruzzi, 24, Torino (Italy)}
\email{valeria.chiadopiat@polito.it}\author[V.~De Cicco]{Virginia De Cicco}
\address{Dipartimento di Scienze di Base  e Applicate per l'Ingegneria, Sapienza Univ.\ di Roma\\
	Via A.\ Scarpa 10 -- I-00185 Roma (Italy)}
\email{virginia.decicco@uniroma1.it}
\author[A.~Melchor Hernandez]{Anderson Melchor Hernandez}
\address{Dipartimento di Matematica, Via Zamboni, 33, 40126, Bologna (Italy)}
\email{anderson.melchor@unibo.it}
\keywords{Lower semicontinuity, relaxation, degenerate variational integrals, weight, Poincar\'e inequality}
\subjclass[2020]{26A15,49J45}
\begin{document}

\begin{abstract}
In this study, we approach the analysis of a degenerate nonlinear functional in one dimension, accommodating a degenerate weight $w$. Our investigation focuses on establishing an explicit relaxation formula for a functional exhibiting $p$-growth for $1< p<+\infty$. We adopt the approach developed in \cite{CC}, where some assumptions like doubling or Muckenhoupt conditions are dropped. Our main tools consist of proving the validity of a weighted Poincar\'e inequality involving an auxiliary weight.
\end{abstract}
\maketitle

\tableofcontents
\section{Introduction}
In this work, we focus on the study of nonlinear functionals in one dimension, allowing for a degenerate weight $w$.  We aim to provide an explicit relaxation formula for a functional that exhibits $p$-growth for $1<p<+\infty$. More precisely, let us set
\begin{align*}
F(u)\coloneqq\left\{
\begin{aligned}
& \int_{\Omega} |\nabla u|^{p}w \de x\text{ \ \ if } u\in AC(\overline\Omega),\\
& +\infty \ \ \ \ \ \ \ \ \ \ \ \ \ \ \text{if}\hskip 0,1cm  u\in X\setminus AC(\overline\Omega),
\end{aligned}
\right.
\end{align*}
where $\Omega$ is an open bounded set in $\mathbb{R}$,  $w$ is a nonnegative, locally integrable function, and $X$ is a topological space comprising measurable functions which will be introduced later on. We then delve into its relaxation, aiming to provide an explicit expression of the lower semicontinuous envelope of $F$, denoted as $\overline{F}$. \\
In the last decades, some works have aimed to study the above functional by considering different functional setups; see for instance, \cite{CD,CC,FM,Ha,Ma}.  In particular, the attention had been given to the case $p=2$. This choice is canonical as it relates to probabilistic problems since in this case $F$ is interpreted as a regular Dirichlet form, guaranteeing the existence of right continuous stochastic processes, as treated in \cite{ALB,RO}. It's noteworthy that the theory of Dirichlet forms is general, with the natural ambient space being $L^{2}(\Omega,\mu)$, where $\mu\coloneqq w(x)\,\de x$. However,  the association between right continuous stochastic processes and $F$ usually requires the validity of a Poincar\'e inequality. Recent extensions have involved the analysis of weighted Sobolev spaces, incorporating geometric aspects \cite{LAMB2,LAMB3}.\\
\noindent
Since the identification of the functional ${\overline F}$ is subtle, some authors have been used the density of $C^{1}$-functions in weighted Sobolev spaces as an important tool, see for instance, \cite{CPSC,CC}.  In this approach, however, some additional assumptions on $w$, as  described in \cite{CPSC}, were necessary. For example,  to prove the density of $C^{1}$-functions, it is sometimes assumed that $w$ satisfies the doubling or Muckenhoupt conditions \cite{HKST,Muk}. Differently, in \cite{CC}, have been adopted the case where such requirements on $w$ are not satisfied, $p=2$, and where $X$ is not fixed a priori.\\
\noindent
Let us now describe our approach to relax the functional $F$ that extends to the case $1<p<+\infty$ the method used in \cite{CC}.  We underline that, a priori, we do not known the space $X$ because its precise structure depends on the choice of $w$. Specifically, for a fixed $w$, we then construct a weight $\hat{w}_{p}$ to define $X$. This function plays a crucial role in compensating for degeneracy present in $w$, and it permits to characterize the domain of $\overline{F}$ and analyze it (see Figure \ref{figura1}, for a precise example of such a function $\hat{w}_{p}$).  In our reasoning, the first step is to prove Poincar\'e inequalities involving $w$ and $\hat{w}_{p}$. Specifically, we consider the $p$-norm of the gradient term of a generic function $u$ weighted by $w$, while the $p$-norm of $u$ itself is weighted by $(\hat{w}_{p})^{p-1}$. These inequalities are referred to as Poincar\'e inequalities with double weight. Subsequently, assuming that $w$ is finitely degenerate (see Definition \ref{deff1} below), we proceed to choose $X=L^{p}((\hat{w}_{p})^{p-1})$, and we show that $AC$-functions are dense, in a suitable Sobolev space $W\subseteq X$ (see formula \eqref{eq:spaceW} below).  It is important to underline that the space $L^{p}(w)$, and $L^{p}((\hat{w}_{p})^{p-1})$ are not comparable, see Remark \ref{obsimp} below. As a consequence, we are able to determine the domain of $\overline{F}$ performing the relaxation in the strong topology of $X$. The resulting relaxed functional $\overline{F}$ maintains the same form as $F$, but its domain consists of functions that are of $L^{p}((\hat{w}_{p})^{p-1})$-integrable type. Instead, the case where $w$ is not finitely degenerate is still an open problem.\\
\noindent
This work is structured as follows. In Section \ref{sec:3}, we study the validity of Poincar\'e inequalities with double weight, see Theorem \ref{Poin1} below. In Section \ref{relaxp}, we formulate and prove our relaxation theorem, see Theorem \ref{mainfinitelocsum} below.

\section{Weighted Poincar\'e inequalities}\label{sec:3}
Let $\Omega$ be an open bounded subset of $\bbR$, and for $1<p<\infty$, we let $\frac{1}{p'}= 1-\frac{1}{p}$. We define
\begin{equation*}
F(u)=\left\{
\begin{aligned}
& \int_{\Omega} |\nabla u|^p\,w\,dx\text{   \ \ if } u\in AC(\overline\Omega)\\
& +\infty \ \ \ \ \ \ \ \ \ \ \ \text{   \ \ if } u\in X\setminus AC(\overline\Omega).
\end{aligned}
\right.
\end{equation*}
Here, $X$ is an appropriate set of integrable functions, that will be chosen in Section \ref{sec:3}. Further, let $\overline F:\,X\to [0,+\infty]$ denote the lower semicontinuous envelope of $F$ w.r.t. the topology of  $X$. We consider a weight $w:\R \to \R$ satisfying
\begin{equation}\label{minassweight}
w \ge 0 \text{ a.e.} ,\, \ w \in L^1_{\rm{loc}}(\bbR).
\end{equation}
From now on, it is not restrictive to assume that $\Omega=(a,b)$ is a bounded open interval, and we consistently interchange $\Omega$ and $(a,b)$ throughout the text. We denote by $I_{p,\Omega, w}$ the set
\begin{align*}
I_{p,\Omega, w}:=\Big\{x\in\Omega:\,&\exists\,\epsilon >0\text{ such that }
w^{-\frac{1}{p-1}}\in L^1\left((x-\epsilon ,x+\epsilon )\right)\Big\}.
\end{align*}
The set $I_{p,\Omega, w}$ is the largest open set in $\Omega$ such that $w^{-\frac{p'}{p}}$ is locally summable. It is noteworthy that this set has already been considered in \cite{CC} for $p=2$. In this work, we are exploring a $p$-version of the results studied in that work. Without loss of generality, we 
can express $I_{p,\Omega, w}$ as the disjoint union
\begin{equation}\label{minassweight3}
I_{p,\Omega, w} = \bigcup_{i=1}^{{N_{p,w}}} (a_{p,i}, b_{p,i}),
\end{equation}
with $1\leq N_{p,w}\leq +\infty$. Subsequently, for the sake of a lean notation, we set $a_{i} \coloneqq a_{p,i}$, $b_{i} \coloneqq b_{p,i}$, $N_{w} \coloneqq N_{p,w}$, and $I_{\Omega,w} \coloneqq I_{p,\Omega,w}$. Our objective is to characterize the relaxation of the functional  $F$ concerning $L^{p}(\Omega,(\hat{w}_{p})^{p-1})$-convergence, where $\hat w$ is a suitable weight (see \eqref{pesop}).  To achieve this, we reintroduce the concept of a finitely degenerate weight, as discussed in \cite{CC}.

\begin{defi}\label{deff1}
\begin{itemize}
\item[(i)] If $I_{\Omega,w}=\,\emptyset$, we put $N_w:=0$.
\item[(ii)]  If $1\le\,N_w<\,+\infty$  we say that $w$ is {\it finitely degenerate} in $\Omega$. 
\item[(iii)]   If $N_w=\,\infty$  we say that $w$ is {\it not finitely degenerate} in $\Omega$.
\end{itemize}
\end{defi}
Furthermore, we define the set
\begin{equation}\label{domw}
{\rm{Dom}}_w:=\Big\{u:\Omega\to\R: u\in W^{1,1}_{\text{loc}}(I_{\Omega, w}),
\int_{I_{\Omega, w}} |u' |^p\,w\,dx<+\infty\Big\}\,.
\end{equation}
As we will see below, this set must be the core of the relaxed functional of $F$ with respect to $L^{p}(\Omega,(\hat w_{p})^{p-1})$-convergence.

\begin{oss}
We note that, if $w^{-\tfrac{1}{p-1}}\in L^1((a, b)),$ then, obviously, $w$ is finitely degenerate in
$\Omega$ with $N_w = 1.$ In this case
$${\rm{Dom}}_w = \{u \in AC([a, b]) :
\int_a^b
|u'|^p w\, dx < +\infty\}$$
(see \cite{CPSC})  since $AC([a, b]) = W^{1,1}([a, b])$.
\end{oss}

\begin{lem}[Fundamental convergence]\label {lemma2}
Let $(u_h)_h\subset AC([a,b])$ such that
\begin{enumerate}
\item[(a)] $\displaystyle\sup_{h\in\bbN}\int_{I_{\Omega,w}}|u_h'|^{p}w\,\de x<+\infty$\,,
\item[(b)] for every $i=\,1,\dots,N_w$ there exists $c_i$ such that $a_i< c_i<b_i$ and there exist finite the following limits
$$
\lim_{h\to+\infty}u_h(c_i)=d_i\in \bbR\,.
$$
\end{enumerate}
Then there exists a subsequence $(u_{h_k})$ and a function 
$u:\,I_{\Omega,w}\to\bbR$ such that 
\begin{enumerate}
\item[(i)] $\displaystyle\lim_{k\to+\infty}u_{h_k}(x) =u(x) $ for every $x\in I_{\Omega,w}$\,,
\item[(ii)] $u\in {\rm{Dom}}_w$,
\item[(iii)] $\displaystyle\int_{I_{\Omega,w}} |u' |^{p}\,w\,\de x\leq \liminf_{k\to+\infty}\int_{I_{\Omega,w}} |u_{h_k}' |^{p}\,w\,\de x$\,.
\end{enumerate}
\end{lem}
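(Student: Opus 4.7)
The plan is to establish pointwise convergence on each connected component of $I_{\Omega,w}$ via a Hölder-type equicontinuity estimate, then extract a further subsequence along which the derivatives converge weakly in $L^p(w\,\de x)$ and identify the weak limit with the distributional derivative of $u$, from which (ii) and (iii) follow.

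First I would apply Hölder's inequality on any compact interval $K \subset (a_i, b_i)$: for $x,y \in K$,
\[
|u_h(x) - u_h(y)| \le \left(\int_y^x |u_h'|^p w \, \de t\right)^{1/p} \left(\int_y^x w^{-1/(p-1)} \, \de t\right)^{1/p'}.
\]
Since $K$ is contained in $I_{\Omega,w}$, a finite covering by intervals from the definition of $I_{\Omega,w}$ yields $w^{-1/(p-1)} \in L^1(K)$; combined with hypothesis (a) this gives uniform equicontinuity of $\{u_h\}$ on $K$. Hypothesis (b) provides pointwise boundedness at $c_i \in (a_i,b_i)$, and equicontinuity propagates this to uniform boundedness of $\{u_h\}$ on every compact $K \subset (a_i,b_i)$.

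Exhausting each $(a_i,b_i)$ by compact intervals $K_{i,n} \nearrow (a_i,b_i)$ and applying Arzelà--Ascoli together with a diagonal argument --- over $n$ for each $i$, and over $i$ as well when $N_w = +\infty$ --- I extract a subsequence $(u_{h_k})$ converging uniformly on every compact subset of $I_{\Omega,w}$ to some continuous $u : I_{\Omega,w} \to \bbR$, which is (i). Reflexivity of $L^p(I_{\Omega,w}, w\,\de x)$ (valid since $1<p<+\infty$) and the bound in (a) allow me to pass to a further subsequence, still denoted $(u_{h_k})$, such that $u_{h_k}' \weak v$ in $L^p(w\,\de x)$ for some $v$. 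Testing the identity $\int u_{h_k}' \varphi \, \de x = -\int u_{h_k} \varphi' \, \de x$ against any $\varphi \in C_c^\infty((a_i,b_i))$ and sending $k \to \infty$, the right-hand side tends to $-\int u \varphi' \, \de x$ by uniform convergence on $\supp \varphi$, while the left-hand side tends to $\int v \varphi \, \de x$ because $\varphi \, w^{-1/p} \in L^{p'}(\supp \varphi)$ thanks to the local summability of $w^{-1/(p-1)}$ on $I_{\Omega,w}$. Hence $u' = v \in L^p(w\,\de x)$ on $I_{\Omega,w}$, giving $u \in {\rm{Dom}}_w$, and (iii) follows from the lower semicontinuity of the norm under weak convergence.

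The main obstacle is precisely this identification of the weak limit $v$ with the distributional derivative $u'$ on each component: the duality pairing between $L^p(w\,\de x)$ and its dual forces the test function against $u_{h_k}'$ to lie in a weighted space involving $w^{-1/(p-1)}$, and only the defining property of $I_{\Omega,w}$ guarantees that any compactly supported $\varphi$ is admissible. A secondary but routine subtlety is the double diagonalization required when $N_w = +\infty$ to produce a single subsequence working on all components simultaneously.
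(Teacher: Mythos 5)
Your argument is correct and complete: the H\"older estimate against $\int_y^x w^{-1/(p-1)}$ gives equicontinuity on compact subsets of each $(a_i,b_i)$, hypothesis (b) anchors the sequence, Arzel\`a--Ascoli with a diagonal extraction yields (i), and the weak compactness of $(u_{h_k}')$ in $L^p(I_{\Omega,w},w\,\de x)$ together with the identification of the weak limit as the distributional derivative (admissible precisely because $\varphi w^{-1}\in L^{p'}(w\,\de x)$ for $\varphi\in C_c^\infty$, by local summability of $w^{-1/(p-1)}$) gives (ii) and, via weak lower semicontinuity of the norm, (iii). The paper does not write out a proof at all --- it only remarks that the statement follows from \cite[Lemma 4.3]{CC} (the case $p=2$) after noting $L^p_{\rm loc}\subset L^1_{\rm loc}$ --- and your scheme is exactly the natural $p$-version of that argument, so this is essentially the same approach.
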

\proof
Let us notice that the proof of this Lemma coincides with the one given in \cite[Lemma 4.3]{CC}. We only need to notice that $L_{\rm loc}^{p}(I_{\Omega,w})\subset L_{\rm loc}^{1}(I_{\Omega,w})$, and the conclusion of our Lemma follows.
\qed

\bigskip

\subsection{An auxiliary weight}
In what follows, we construct a suitable weight $\hat{w}_{p}$ for $1<p<+\infty$ for which it is possible to prove a Poincar\'e inequality involving $w$ and $(\hat{w}_{p})^{p-1}$.
Let $w:\,\Omega=(a,b)\to [0,\infty)$ be a function satisfying \eqref{minassweight} and \eqref{minassweight3}. We let $\wstar_{p}:\Omega\to [0,+\infty[$ be defined as
\begin{equation}\label{pesop}
\wstar_{p}(x):=
\begin{cases}
\displaystyle\lim_{x\to a_i^+}  \left(\int_{x}^{{a_i+b_i}\over{2}}\frac{1}{w^{\frac{1}{p-1}}(y)}\,dy\right)^{-1}    &\text{ if }x=a_i \\
\left(\int_{x}^{{a_i+b_i}\over{2}}\frac{1}{w^{\frac{1}{p-1}}(y)}\,dy\right)^{-1} &\text{ if } {a_i}< x\leq {{3a_i+b_i}\over{4}}\\
\left(\int_{{3a_i+b_i}\over{4}}^{{a_i+3b_i}\over{4}}\frac{1}{w^{\frac{1}{p-1}}(y)}\,dy\right)^{-1} &\text{ if } {{3a_i+b_i}\over{4}}\leq x\leq {{a_i+3b_i}\over{4}}\\
\left(\int_{{a_i+b_i}\over{2}}^x\frac{1}{w^{\frac{1}{p-1}}(y)}\,dy\right)^{-1} &\text{ if } {{a_i+3b_i}\over{4}}\leq x<b_i\\
\displaystyle\lim_{x\to b_i^-}  \left(\int_{{a_i+b_i}\over{2}}^{x}\frac{1}{w^{\frac{1}{p-1
}}(y)}\,dy\right)^{-1}    &\text{ if } x=b_i\\
\ \ \ \ \ \ \ \ \ 0&\text{ if } x\in \Omega\setminus \overline{I_{\Omega,w}}\,.
\end{cases}
\end{equation}
\begin{oss}\label{newoss}
Let us notice that $\hat{w}_{p}$ heavily depends on $p$. In this part, $\hat{w}_{p}$ is defined as the inverse of an integral term, which allows us to use its nice properties, such as continuity, that we used to prove the Proposition 2.4. 
\end{oss}
In the next, we give an explicit example of the previous function $\hat{w}_{p}$. 
\begin{figure}[!ht]
\centering
\begin{subfigure}[b]{0.4\textwidth}
\includegraphics[width=\textwidth]{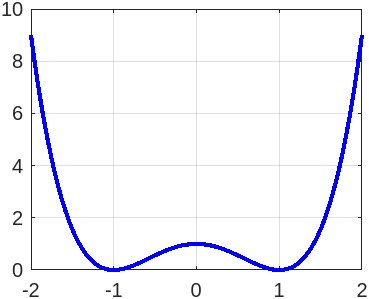}
\end{subfigure}
\hfill
  \begin{subfigure}[b]{0.37\textwidth}
    \includegraphics[width=\textwidth]{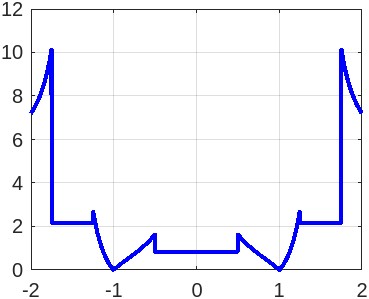}
  \end{subfigure}
  \caption{In the first figure on the left hand side, we have the profile of $w(x)=(1-x^{2})^{2}$ for  $x\in (-2,2)$, while in the right hand side, we have its associated weight $\hat{w}_{2}$. In this case, we note that $N_{w}=3$.}
  \label{figura1}
\end{figure}
In the relaxation of $F$, we will consider $L^{p}(\Omega, (\hat{w}_{p})^{p-1})$-convergence. Before providing the precise details of how we relax $F$, let us gather some properties of the functions $\wstar_{p}$ in the following proposition. The proof is elementary, taking the definitions into account.
\begin{prop}\label{propw}
\begin{itemize}\
\item[(i)] Suppose that $w^{-\frac{1}{p-1}}$ is  not locally summable in $\Omega$, that is, $I_{\Omega,w}=\,\emptyset$. Then $\wstar_{p}\equiv 0$.
\item[(ii)] For each $i=1,\dots, N_w$, $\wstar_{p}$ is constant in $[{{3a_i+b_i}\over{4}},{{a_i+3b_i}\over{4}}]$, increasing in $[{a_i},{{3a_i+b_i}\over{4}}]$, decreasing in $[{{a_i+3b_i}\over{4}},b_i]$ and absolutely continuous in each interval.  In particular, the following hold true:
\[
0\,<\wstar_{p}(x)\le\,\sup_{y\in (a_i,b_i)}\wstar_{p}(y)<\,\infty\quad\forall\,x\in (a_i,b_i)\,,
\]
\[
\inf_{x\in [\alpha,\beta]}w(x)>\,0\text{ for each $x\in [\alpha,\beta]$, $a_i<\,\alpha<\,\beta<\,b_i$, }
\]
and $\wstar_{p}(a_i)=\,0$ (respectively $\wstar_{p}(b_i)=\,0$) if and only if $w^{-\frac{1}{p-1}}\notin L^1((a_i,\frac{a_i+b_i}{2}))$ (respectively $w^{-\frac{1}{p-1}}\notin L^1((\frac{a_i+b_i}{2},b_i))$). 
\item[(iii)] We have
\begin{equation}\label{derwstar}
(\wstar_{p})^\prime
=
{{\left(\wstar_{p}\right)^2}\over{w^{\frac{1}{p-1}}}}\quad\text{ a.e. in }\left({a_i},{{3a_i+b_i}\over{4}}\right)\cup \left({{a_i+3b_i}\over{4}},b_i\right)\,.
\end{equation}
\item[(iv)] Suppose that $w^{-\frac{1}{p-1}}\in L^1(\Omega)$. Then there exists a  constant $c>\,0$ such that
\[
0<\,\frac{1}{c}\le\,\wstar_{p}(x)\le\,c\quad\text{ a.e. } x\in\Omega\,.
\]
\item[(v)] Suppose that $w$ is finitely degenerate in $\Omega$, that is, \eqref{minassweight3} holds with $1\le\,N_w<\,\infty$. Then there exists a  constant $c>\,0$ such that
\[
0\le\,\wstar_{p}(x)\le\,c\quad\text{ a.e. }x\in\Omega\,.
\]
\item[(vi)] Suppose that $w$ is not finitely degenerate in $\Omega$, that is, \eqref{minassweight3} holds with $N_w=\,\infty$. Then $\wstar_{p}\in L^\infty_{\rm loc}(I_{\Omega,w})$.
\end{itemize}
\end{prop}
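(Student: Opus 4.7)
The proof is a direct bookkeeping from the piecewise definition \eqref{pesop}, and I would organize it item by item. Item (i) reads off the last case of \eqref{pesop}, since $I_{\Omega,w}=\emptyset$ gives $\Omega\setminus\overline{I_{\Omega,w}}=\Omega$. For (ii), fix a component $(a_i,b_i)$ of $I_{\Omega,w}$: monotonicity on each of the three pieces is immediate from the monotonicity of $x\mapsto \int_x^{(a_i+b_i)/2} w^{-1/(p-1)}(y)\,dy$ and its mirror on the right piece, while the middle piece is constant by design. Interior positivity $0<\hat{w}_p(x)<\infty$ on $(a_i,b_i)$ follows because $w^{-1/(p-1)}\in L^1_{\mathrm{loc}}(I_{\Omega,w})$ and is positive on a set of positive measure (as $w\in L^1_{\mathrm{loc}}$ forces $w<\infty$ a.e.), so each defining integral is a finite positive number. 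The boundary characterization $\hat{w}_p(a_i)=0 \iff w^{-1/(p-1)}\notin L^1((a_i,(a_i+b_i)/2))$ is simply the boundary definition read as a limit.

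For (iii) (and the remaining part of (ii), absolute continuity), set $g(x)=\int_x^{(a_i+b_i)/2} w^{-1/(p-1)}(y)\,dy$ on a left piece, so $\hat{w}_p=1/g$. The Lebesgue differentiation theorem gives $g'(x)=-w^{-1/(p-1)}(x)$ a.e., hence $\hat{w}_p'=-g'/g^2 = w^{-1/(p-1)}\hat{w}_p^2$ a.e.; the right-piece case is symmetric. Away from a vanishing endpoint, absolute continuity of $\hat{w}_p=1/g$ is immediate since $g$ is absolutely continuous and bounded away from $0$. When $\hat{w}_p(a_i)=0$, the derivative is nonnegative and integrates telescopically:
\[
\int_{a_i}^{(3a_i+b_i)/4} w^{-1/(p-1)}\hat{w}_p^2\,dx=\hat{w}_p\bigl(\tfrac{3a_i+b_i}{4}\bigr)-\lim_{x\to a_i^+}\tfrac{1}{g(x)}=\hat{w}_p\bigl(\tfrac{3a_i+b_i}{4}\bigr)<\infty,
\]
so $\hat{w}_p'\in L^1$ on the piece and the fundamental theorem form of absolute continuity holds.

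The remaining items are covering arguments built on (ii). In (iv), $w^{-1/(p-1)}\in L^1(\Omega)$ forces $I_{\Omega,w}=\Omega=(a,b)$ and $N_w=1$; the two-sided bound comes from $\hat{w}_p(x)\le \bigl(\int_{(3a+b)/4}^{(a+b)/2} w^{-1/(p-1)}\bigr)^{-1}$ on the left piece (with analogues on the other pieces) and $\hat{w}_p(x)\ge \bigl(\int_a^b w^{-1/(p-1)}\bigr)^{-1}>0$ everywhere. For (v), finite degeneracy gives finitely many components, (ii) provides a finite supremum on each, and their maximum is a global bound, with $\hat{w}_p\equiv 0$ off $\overline{I_{\Omega,w}}$. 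For (vi), any compact $K\subset I_{\Omega,w}$ meets only finitely many components by compactness, and $\hat{w}_p$ is bounded on each by (ii). The only mildly nontrivial step is the absolute continuity argument at a vanishing endpoint, as the naive estimate on $\hat{w}_p'$ would blow up there and one must use the reciprocal-derivative identity to obtain the $L^1$ control.
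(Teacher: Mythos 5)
Your proof is correct and follows exactly the route the paper intends: the paper omits the argument entirely (``the proof is elementary, taking the definitions into account''), and what you supply is that elementary verification, including the only genuinely delicate point --- absolute continuity up to an endpoint where $\hat w_p$ vanishes, which you handle correctly via monotone convergence of $\int_{a_i+\delta}^{\,\cdot}(1/g)'$ as $\delta\to0^+$. Two small mismatches with the literal statement are worth flagging. First, your ``the right-piece case is symmetric'' actually yields $(\hat w_p)'=-(\hat w_p)^2\,w^{-1/(p-1)}$ on $\left(\frac{a_i+3b_i}{4},b_i\right)$, i.e.\ \eqref{derwstar} as printed has the wrong sign on that piece (it must, since (ii) says $\hat w_p$ is decreasing there); the identity is only used later through $|(\hat w_p)'|$, so nothing downstream breaks, but you should record the sign explicitly rather than claim a formula that contradicts your own monotonicity statement. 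Second, you silently skip the displayed claim $\inf_{x\in[\alpha,\beta]}w(x)>0$; as written about $w$ it is false (take $w(x)=|x-c|^{(p-1)/2}$ near an interior point $c$, for which $w^{-1/(p-1)}$ is locally summable but $\inf w=0$), and it is presumably a typo for $\inf_{[\alpha,\beta]}\hat w_p>0$, which your interior-positivity-plus-monotonicity argument already delivers --- say so explicitly.
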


\begin{oss}
Let us consider the following example already considered in \cite[Example 4.7]{CC}. Let us take $\alpha>0$ such that $\frac{\alpha}{p-1}>1$. Here, we  also observe that if $w$ is not finitely degenerate in some open set $\Omega$, then it can also happens that $\wstar_{p}\notin L^{1}(\Omega)$. Suppose that $\Omega=(0,1)$ and let $(a_{i},b_{i})$, $i=1,\ldots, +\infty,$ be a sequence of disjoint open intervals in $(0,1)$ and $m_{i}$ be a sequence of positive real numbers which will be fixed later on. Let $w:(0,1)\rightarrow [0,+\infty)$ defined as follows:
\begin{align*}
w(x)\coloneqq
\begin{cases}
m_{i}(x-a_{i})^{\alpha}& \hskip 0,1cm \text{if $a_{i}\leq x\leq \frac{a_{i}+b_{i}}{2}$,}\\
m_{i}(b_{i}-x)^{\alpha}& \hskip 0,1cm \text{if $\frac{a_{i}+b_{i}}{2}\leq x\leq b_{i}$,}\\
0&\hskip 0,1cm \text{otherwise}.
\end{cases}
\end{align*}
Let us fix $a_{i}\leq x\leq \frac{3a_{i}+b_{i}}{4}$. Then by definition of $\wstar_{p}$ we have
\begin{align*}
\wstar_{p}(x)=\frac{(\alpha_{p}-1)m_{i}^{\frac{1}{p-1}}(x-a_{i})^{\alpha_{p}-1}}{1-\left(\frac{2(x-a_{i})}{b_{i}-a_{i}}\right)^{\alpha_{p}-1}},
\end{align*}
where $\alpha_{p}\coloneqq \frac{\alpha}{p-1}$. Let us notice that $0\leq \frac{2(x-a_{i})}{b_{i}-a_{i}}\leq \frac{1}{2}$, and then 
\begin{align*}
(\alpha_{p}-1)m_{i}^{\frac{1}{p-1}}(x-a_{i})^{\alpha_{p}-1}\leq \wstar_{p}(x)\leq \frac{(\alpha_{p}-1)m_{i}^{\frac{1}{p-1}}(x-a_{i})^{\alpha_{p}-1}}{1-\left(\frac{1}{2}\right)^{\alpha_{p}-1}}.
\end{align*}
Therefore,
\begin{align*}
\wstar_{p}(x)\cong m_{i}^{\frac{1}{p-1}}(x-a_{i})^{\alpha_{p}-1},\ \ a_{i}\leq x\leq \frac{3a_{i}+b_{i}}{4},
\end{align*}
and
\begin{align*}
\int_{0}^{\frac{3a_{i}+b_{i}}{4}}\wstar_{p}(x)\de x\cong m_{i}^{\frac{1}{p-1}}(b_{i}-a_{i})^{\alpha_{p}}.
\end{align*}
Hence, if we take the set $\{m_{i}: i\in\bbN\}$ such that 
\begin{align}\label{divseries}	
\sum_{i=1}^{+\infty}m_{i}^{\frac{1}{p-1}}(b_{i}-a_{i})^{\alpha_{p}}=+\infty
\end{align}
we get that $\wstar_{p}\notin L^{1}((0,1))$. An example of function $w$ for which the latter series diverges can be given in the following manner. Suppose that for each $i\in\bbN$, $b_{i}-a_{i}=\frac{1}{2^{i}}$, and we choose $m_{i}\coloneqq 2^{(i+1)\alpha}$. Since $\alpha>0$, we get that $m_{i}^{\frac{1}{p-1}}>2^{i\alpha_{p}}$, and thus \eqref{divseries} holds true.

\end{oss}

\begin{oss}\label{obsimp}
We recall another example already considered in \cite[Example 5.1]{CC} in order to see that the Lebesgue spaces $L^p(\Omega,w)$ and $L^p(\Omega,(\hat{w}_{p})^{p-1})$ are independent. 
Let $\Omega=(0,1)$, $w:(0,1)\to (0,\infty)$, $w(x)=x^\alpha$, with $-1<\alpha< p-1$, $w\in L^1((0,1))$ and 
\begin{equation*}\label{strucwstar4}
\frac1{w^{\frac1{p-1}}}\in L^1((0,1))
\end{equation*}
and so, by (iv) in Proposition \ref{propw}, it follows that
\begin{equation*}\label{strucwstar5}
L^p(\Omega,(\hat{w}_{p})^{p-1})=\,L^p(\Omega)\,.
\end{equation*}
Therefore, if $\alpha\in (0,p-1)$, since $w(x)<\,1$ for each $x\in (0,1)$,
\[
L^p(\Omega,(\hat{w}_{p})^{p-1})=\,L^p(\Omega)\subsetneq L^p(\Omega,w)\,;
\]
if $\alpha=\,0$, since $w(x)=\,1$ for each $x\in (0,1)$,
\[
L^p(\Omega,(\hat{w}_{p})^{p-1})=\,L^p(\Omega)= L^p(\Omega,w).
\]
Let $1<p<2$; if $\alpha\in (-p+1,0)$, since $w(x)>\,1$ for each $x\in (0,1)$,
\[
L^p(\Omega,\wstar_p)=\,L^p(\Omega)\supsetneq L^p(\Omega,w)\,.
\]
\end{oss}

\subsection{A Poincar\'e inequality with a double weight}
In what follows, we derive a weighted Poincar\'e inequality that we use later on. We first state some preliminary lemmas.

\begin{prop}\label {poincarep}
Let us consider a fixed $u\in {\rm{Dom}}_w$, $i=1,\dots,N_w$, and let $\frac{1}{p}+\frac{1}{p\prime}=1$. Let us take $\eta,x$ such that $a_i<\eta\leq x\leq {{a_i+b_i}\over{2}}$. The following hold true:

\begin{equation}\label{a1}
|u(x)-u(\eta)|\,\sqrt[p\prime]{\wstar_{p}(\eta)}
\leq \left(\int_{\eta}^x|u'(y)|^{p}w(y)\,\de y\right)^{1\over p}\,;
\end{equation}

\begin{equation}\label{a2}
|u(\eta)|^{p}(\wstar_{p}(\eta))^{p-1}\leq 2^{p-1}\left(|u(x)|^{p}(\wstar_{p}(\eta))^{p-1}+\int_{a_i}^{x}|u'(y)|^{p}w(y)\,\de y\,\right).
\end{equation}
Let us take $\eta,x$ such that ${{a_i+b_i}\over{2}}\leq x\leq \eta<b_i$. The following hold true:
\begin{equation}\label{a3}
|u(x)-u(\eta)|\,\sqrt[p\prime]{\wstar_{p}(\eta)}
\leq \left(\int_{x}^\eta|u'(y)|^{p}w(y)\,\de y\right)^{1\over {p}}\,;
\end{equation}

\begin{equation}\label{a4}
|u(\eta)|^{p}(\wstar_{p}(\eta))^{p-1}\leq 2^{p-1}\left(|u(x)|^{p}(\wstar_{p}(\eta))^{p-1}+\int_{x}^{b_i}|u'(y)|^{p}w(y)\,\de y\,\right).
\end{equation}

\end{prop}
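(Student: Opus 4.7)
The plan is to establish (a1) first by a direct application of H\"older's inequality to the fundamental theorem of calculus, then derive (a2) from (a1) via the triangle inequality combined with the convexity estimate $(s+t)^{p}\le 2^{p-1}(s^{p}+t^{p})$; inequalities (a3) and (a4) follow by the same argument performed on the right half $[(a_{i}+b_{i})/2,b_{i})$, where the definition of $\wstar_{p}$ is the mirror image of the one used on the left half.

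For (a1), since $u\in W^{1,1}_{\mathrm{loc}}(I_{\Omega,w})$, I would start from $u(x)-u(\eta)=\int_{\eta}^{x}u'(y)\,\de y$, factor the integrand as $|u'|=(|u'|^{p}w)^{1/p}\cdot w^{-1/p}$, and apply H\"older's inequality with exponents $p$ and $p'$ to obtain
\[
|u(x)-u(\eta)|\le\left(\int_{\eta}^{x}|u'|^{p}w\,\de y\right)^{1/p}\left(\int_{\eta}^{x}w^{-1/(p-1)}\,\de y\right)^{1/p'},
\]
using $p'/p=1/(p-1)$. The key estimate is then
\[
\int_{\eta}^{x}w^{-1/(p-1)}\,\de y\le\frac{1}{\wstar_{p}(\eta)},
\]
which I would verify by a short case split based on the three branches of definition \eqref{pesop}. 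If $\eta\in(a_{i},(3a_{i}+b_{i})/4]$, then $1/\wstar_{p}(\eta)=\int_{\eta}^{(a_{i}+b_{i})/2}w^{-1/(p-1)}\,\de y$ and, since $x\le(a_{i}+b_{i})/2$, monotonicity of the integral yields the bound at once. If instead $\eta\in[(3a_{i}+b_{i})/4,(a_{i}+b_{i})/2]$, then $[\eta,x]\subseteq[(3a_{i}+b_{i})/4,(a_{i}+3b_{i})/4]$ and the bound is immediate from the constant value of $\wstar_{p}$ on the central plateau. Multiplying through by $\wstar_{p}(\eta)^{1/p'}$ yields (a1).

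Inequality (a2) then follows from $|u(\eta)|\le|u(x)|+|u(x)-u(\eta)|$ together with the convexity bound above. Raising to the $p$-th power, multiplying by $\wstar_{p}(\eta)^{p-1}$, and using (a1) to control the cross term (recalling that $p/p'=p-1$), one obtains
\[
|u(\eta)|^{p}\wstar_{p}(\eta)^{p-1}\le 2^{p-1}\left(|u(x)|^{p}\wstar_{p}(\eta)^{p-1}+\int_{\eta}^{x}|u'|^{p}w\,\de y\right),
\]
and the last integral is majorized by $\int_{a_{i}}^{x}|u'|^{p}w\,\de y$. The main obstacle is the case analysis needed to bound $\int_{\eta}^{x}w^{-1/(p-1)}\,\de y$ uniformly by $1/\wstar_{p}(\eta)$; the possible boundary vanishings $\wstar_{p}(a_{i})=0$ or $\wstar_{p}(b_{i})=0$ play no role here because the hypotheses $a_{i}<\eta<b_{i}$ together with Proposition \ref{propw}(ii) ensure $\wstar_{p}(\eta)>0$, so the division by $\wstar_{p}(\eta)^{1/p'}$ is legitimate. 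Inequalities (a3) and (a4) are then obtained by repeating the same steps, writing $u(\eta)-u(x)=\int_{x}^{\eta}u'(y)\,\de y$ and using the mirror branches of \eqref{pesop}.
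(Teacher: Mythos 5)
Your proposal is correct and follows essentially the same route as the paper: the fundamental theorem of calculus plus H\"older's inequality with exponents $p,p'$, a case split on the branches of \eqref{pesop} to bound $\int_\eta^x w^{-1/(p-1)}\,\de y$ by $1/\wstar_p(\eta)$, and the convexity estimate $(s+t)^p\le 2^{p-1}(s^p+t^p)$ to pass from \eqref{a1} to \eqref{a2}. The only cosmetic difference is that the paper first enlarges the inner integral to $\int_\eta^{(a_i+b_i)/2}$ before comparing with $\wstar_p(\eta)$, which changes nothing.
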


\proof. 
By definition of ${\rm{Dom}}_w$, and by the immersion of $W^{1,1}(I_{\Omega,w})$ into $AC(I_{\Omega,w})$ , we also have that $u\in AC_{\rm loc}((a_i,b_i))$. Then for every $x, \eta\in]a_i,{{a_i+b_i}\over{2}}]$ such that $a_i<\eta\leq x\leq {{a_i+b_i}\over{2}}$ we have
\begin{equation}\label{a5}
\begin{split}
|u(x)-u(\eta)|=&\left|\int_\eta^{x}u'(y)\,\de y\right|\leq
\left(\int_{\eta}^x|u'(y)|^{p}w(y)\,\de y\right)^{1\over p}
\left(\int_{\eta}^{x} w^{-\frac{p'}{p}}(y)\,\de y\right)^{1\over p'}\\
&  
\leq \left(\int_{\eta}^x|u'(y)|^{p} w(y)\,\de y\right)^{1\over p}
\left(\int_{\eta}^{{{a_i+b_i}\over{2}}}w^{-\frac{p'}{p}}(y)\,\de y\right)^{1\over p'}\,.
\end{split}
\end{equation}
Let us noticing that, if $a_i<\eta\le\min\{\frac{3a_i+b_i}4,x\}$, then \eqref{a1} follows by \eqref{a5} and the definition of $\wstar$. Furthermore, if $\frac{3a_i+b_i}4\le \eta\le\,\frac{a_i+b_i}2$, since we have that
\[
\left(\int_{\eta}^{{{a_i+b_i}\over{2}}}\frac{1}{w^{\frac{p'}{p}}(y)}\,\de y\right)^{1\over p\prime}\le\,\frac{1}{\sqrt[p\prime]{\wstar_{p}(\eta)}}\,,
\]
\eqref{a1} still follows by \eqref{a5} and the definition of $\wstar_{p}$.
Then, since
\begin{equation*}
|u(\eta)|^{p}\leq2^{p-1}\left(|u(x)|^{p}+|u(\eta)-u(x)|^{p}\right)\,,
\end{equation*}
by \eqref{a1}, we then deduce \eqref{a2}. The remaining formulas \eqref{a3} and (\ref{a4}) follow  by arguing in a similar way.
\qed

\bigskip
Some consequences of Proposition \ref{poincarep} are summarized in the following Corollary.
\begin{cor}\label{proppp1}
Let us fix $u\in {\rm{Dom}}_w$, and $i=1,\dots,N_w$. Then the following hold true:
\begin{itemize}
\item[(i)]
$\displaystyle
|u(\eta)|^{p}(\wstar_{p}(\eta))^{p-1}
\leq 2^{p-1}\left(\left|u\left(\frac{a_i+b_i}{2}\right)\right|^{p}\,(\wstar_{p}(\eta))^{p-1}
+\int_{a_i}^{b_i}|u'(y)|^{p}w(y)\,\de y\right)\,,
$
for each $\eta\in (a_i,b_i)$. Furthermore, $u\in L^{p}((a_i,b_i),(\wstar_{p})^{p-1})$, and if $N_{w}<+\infty$  then $u\in L^{p}(\Omega,(\wstar_{p})^{p-1})$.
\item[(ii)] Suppose that $\displaystyle{\int_{a_i}^{\frac{a_i+b_i}{2}}\frac{1}{w^{\frac{1}{p-1}}}\de x=+\infty}$ (respectively, suppose that $\displaystyle{\int_{\frac{a_i+b_i}{2}}^{b_i}\frac{1}{w^{\frac{1}{p-1}}}\de x=+\infty}$). Then there exists 

$$\displaystyle\lim_{x\to a_i^+} (u^{p} \,(\wstar_{p})^{p-1})(x)=\,0 \,(\text{respectively,}\lim_{x\to b_i^-} (u^{p} \,(\wstar_{p})^{p-1})(x)=\,0)\,.
$$
\item[(iii)] Suppose that $\displaystyle{\int_{a_i}^{\frac{a_i+b_i}{2}}\frac{1}{w^{\frac{1}{p-1}}}\de x<\,\infty}$ (respectively, suppose that $\displaystyle{\int_{\frac{a_i+b_i}{2}}^{b_i}\frac{1}{w^{\frac{1}{p-1}}}\de x<\,\infty}$). Then 
\[u\in AC\Big(\big[a_i,\frac{a_i+b_i}{2}\big]\Big) \,(\text{respectively, }u\in AC\Big(\big[\frac{a_i+b_i}{2},b_i\big]\Big)\,.
\]
\end{itemize}
\end{cor}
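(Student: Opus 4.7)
The plan is to derive all three items from the pointwise inequalities of Proposition~\ref{poincarep}, combined with the structural properties of $\wstar_p$ in Proposition~\ref{propw}(ii).

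For (i), I specialize \eqref{a2} and \eqref{a4} to $x=(a_i+b_i)/2$: if $\eta\le(a_i+b_i)/2$ I invoke \eqref{a2}, otherwise \eqref{a4}, and in both cases the gradient integral on the right is bounded by $\int_{a_i}^{b_i}|u'|^{p}w\,\de y$, giving the displayed inequality. To pass from the pointwise bound to integrability, I integrate in $\eta$ over $(a_i,b_i)$: the first term is integrable because $\wstar_p$ is bounded on $(a_i,b_i)$ by Proposition~\ref{propw}(ii), and the second is a constant on a bounded interval. Hence $u\in L^{p}((a_i,b_i),(\wstar_p)^{p-1})$. When $N_w<+\infty$, summing over $i$ and using that $\wstar_p\equiv 0$ on $\Omega\setminus\overline{I_{\Omega,w}}$ yields $u\in L^{p}(\Omega,(\wstar_p)^{p-1})$.

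For (ii), I treat only the left endpoint by symmetry. The hypothesis $\int_{a_i}^{(a_i+b_i)/2}w^{-1/(p-1)}=+\infty$ combined with Proposition~\ref{propw}(ii) gives $\wstar_p(a_i)=0$ and continuity of $\wstar_p$ at $a_i$, so $\wstar_p(\eta)\to 0$ as $\eta\to a_i^+$. The key point is to use the freedom in the upper endpoint $x$ of \eqref{a2}: given $\epsilon>0$, the absolute continuity of the integral allows me to pick $x_0\in(a_i,(a_i+b_i)/2]$ with $\int_{a_i}^{x_0}|u'|^{p}w\,\de y<\epsilon$. Applying \eqref{a2} with this $x_0$ in place of $x$, I obtain $|u(\eta)|^{p}(\wstar_p(\eta))^{p-1}\le 2^{p-1}|u(x_0)|^{p}(\wstar_p(\eta))^{p-1}+2^{p-1}\epsilon$ for $a_i<\eta<x_0$. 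Since $u(x_0)$ is a fixed finite number (recall $u\in W^{1,1}_{\text{loc}}(I_{\Omega,w})$), the first term vanishes as $\eta\to a_i^+$, giving $\limsup_{\eta\to a_i^+}|u(\eta)|^{p}(\wstar_p(\eta))^{p-1}\le 2^{p-1}\epsilon$; letting $\epsilon\to 0$ gives the claim. This step is the main obstacle, because applying \eqref{a2} with the fixed choice $x=(a_i+b_i)/2$ only yields boundedness, not decay.

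For (iii), again it suffices to treat the left endpoint. The H\"older inequality with exponents $(p,p')$ yields
\begin{equation*}
\int_{a_i}^{(a_i+b_i)/2}|u'(y)|\,\de y\le\left(\int_{a_i}^{(a_i+b_i)/2}|u'|^{p}w\,\de y\right)^{1/p}\left(\int_{a_i}^{(a_i+b_i)/2}w^{-1/(p-1)}\,\de y\right)^{1/p'},
\end{equation*}
and both factors are finite, the first by $u\in{\rm{Dom}}_w$ and the second by hypothesis. Hence $u'\in L^{1}([a_i,(a_i+b_i)/2])$. Combined with the local absolute continuity of $u$ inside $I_{\Omega,w}$, the Cauchy criterion shows that $\lim_{x\to a_i^+}u(x)$ exists, so $u$ extends to an absolutely continuous function on the closed interval $[a_i,(a_i+b_i)/2]$, completing the proof.
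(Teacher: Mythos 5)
Your proposal is correct and follows essentially the same route as the paper: item (i) by specializing \eqref{a2} and \eqref{a4} to $x=(a_i+b_i)/2$ and integrating using the boundedness of $\wstar_p$ from Proposition \ref{propw}(ii); item (ii) by exploiting the freedom in $x$ (your $\epsilon$-first formulation is just a reordering of the paper's iterated limits $\limsup_{\eta\to a_i^+}$ followed by $x\to a_i^+$); item (iii) by H\"older to get $u'\in L^1$ near the endpoint and then the fundamental theorem of calculus. No gaps; you even spell out the integrability step in (i) that the paper leaves implicit.
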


\proof
(i) Note that by (\ref{a1}) and  (\ref{a2}) with $x={{a_i+b_i}\over{2}}$, we can obtain the desired inequality. We now aim to justify (ii). Let $  a_i<\eta\leq x\leq \frac{a_i+b_i}{2}$.  Suppose that $\displaystyle{\int_{a_i}^{\frac{a_i+b_i}{2}}\frac{1}{w^{\frac{1}{p-1}}}\de x=+\infty}$. By the definition of $\wstar_{p}$, we obtain that $\lim_{\eta\to a_i^+}\wstar_{p}(\eta)=0$.
Furthermore,  for each $x\in (a_i,\frac{a_i+b_i}{2})$, we have that by \eqref{a2} the following inequality holds true:
$$
\limsup_{\eta\to a_i^+}|u(\eta)|^{p} (\wstar_{p}(\eta))^{p-1}\leq 2^{p-1}\int_{a_i}^x|u'(y)|^{p}w\,\de y.
$$
Hence, by letting $\lim$ as $x\to a_i^+$ in the previous inequality, then
that
\[
\lim_{\eta\to a_i^+}|u(\eta)|^{p}(\wstar_{p}(\eta))^{p-1}= 0\,.
\]
Now, let us suppose that $\displaystyle{\int_{\frac{a_i+b_i}{2}}^{b_i}\frac{1}{w^{\frac{1}{p-1}}}\de x=+\infty}$. Then, we immediately obtain that
\[
\lim_{\eta\to b_i^-}|u(\eta)|^{p}(\wstar_{p}(\eta))^{p-1}= 0\,.
\]
(iii)  Lastly, let us suppose that $\displaystyle{\int_{a_i}^{\frac{a_i+b_i}{2}}\frac{1}{w^{\frac{1}{p-1}}}\de x<\,\infty}$. We now prove that $u\in AC\big(\big[a_i,\frac{a_i+b_i}{2}\big]\big)$. Since $u\in AC([a_i+\delta,\frac{a_i+b_i}{2}])$, for each $\delta>0$, it is sufficient to prove that there exists the following limit
\begin{equation}\label{popol}
\lim_{\eta\to a_i^+}u(\eta)\in\R.
\end{equation}
Let us first show that
\begin{equation}\label{rtrt}
u'\in L^1\left(\left(a_i,\frac{a_i+b_i}2\right)\right).
\end{equation} 
Indeed, let us notice that by H\"{o}lder inequality, it holds that
\begin{align*}
\int_{a_{i}}^{\frac{a_{i}+b_{i}}{2}}\vert u'(x)\vert \de x\leq \left(\int_{a_{i}}^{\frac{a_{i}+b_{i}}{2}}\vert u'(x)\vert^{p}w(x)\de x \right)^{\frac{1}{p}}\left(\int_{a_{i}}^{\frac{a_{i}+b_{i}}{2}}w(x)^{-\frac{1}{p-1}}\de x\right)^{\frac{1}{p'}}<+\infty.
\end{align*}
On the other hand, by the fundamental theorem of Calculus for every $\eta\in(a_i,\frac{a_i+b_i}{2}]$
\begin{equation}\label{mbmb}
u(\eta)=u\Big(\frac{a_i+b_i}{2}\Big)-\int_\eta^{\frac{a_i+b_i}{2}}u'(x) \de x.
\end{equation}
Therefore, by \eqref{rtrt} and \eqref{mbmb},we then deduce the existence of the desired limit \eqref{popol}. Let us observe that the remaining case is similar.
\qed

\bigskip
We now aim to prove the validity of a Poincar\'e type inequality with respect to the weight function $(\hat{w}_{p})^{p-1}$.
\begin{teo}[Poincar\'e type inequality on ${\rm{Dom}}_w$]\label{Poin1}
Let $1\leq N_{w}\leq +\infty$. For every $u\in {\rm{Dom}}_w$,  it holds true that
\begin{equation}\label{poincareformula}
\sum_{i=1}^{N_{w}}\medint_{a_i}^{b_i}
\left|u(\eta)-u\left(\frac{a_i+b_i}{2}\right)\right|^{p}(\wstar_{p}(\eta))^{p-1}\,\de \eta\leq \int_{I_{\Omega,w}}|u'(y)|^{p}w(y)\,\de y\,.
\end{equation}
\end{teo}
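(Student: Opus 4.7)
The plan is to reduce the global estimate to the pointwise bounds already furnished by Proposition \ref{poincarep} and to close by applying Fubini--Tonelli on each component interval. Fix $i\in\{1,\dots,N_w\}$ and set $m_i=(a_i+b_i)/2$. Applying \eqref{a1} with $x=m_i$ (raised to the $p$-th power, using $p/p'=p-1$) for $\eta\in(a_i,m_i]$, and \eqref{a3} with $x=m_i$ for $\eta\in[m_i,b_i)$, I obtain the two pointwise inequalities
\begin{equation*}
\left|u(\eta)-u(m_i)\right|^p(\wstar_p(\eta))^{p-1}\leq \int_{\eta}^{m_i}|u'(y)|^p w(y)\,\de y, \qquad \eta\in(a_i,m_i],
\end{equation*}
together with the symmetric estimate having $\int_{m_i}^{\eta}$ on $[m_i,b_i)$.

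The central step is then to integrate each pointwise bound with respect to $\eta$ over its half-interval and exchange the order of integration (justified by Tonelli, since the integrands are nonnegative and measurable). The two double integrals collapse to
\begin{equation*}
\int_{a_i}^{b_i}\left|u(\eta)-u(m_i)\right|^p(\wstar_p(\eta))^{p-1}\,\de\eta \leq \int_{a_i}^{m_i}|u'(y)|^p w(y)(y-a_i)\,\de y + \int_{m_i}^{b_i}|u'(y)|^p w(y)(b_i-y)\,\de y.
\end{equation*}
Since $(y-a_i)\leq (b_i-a_i)/2$ on $[a_i,m_i]$ and $(b_i-y)\leq (b_i-a_i)/2$ on $[m_i,b_i]$, the right-hand side is bounded by $\tfrac{b_i-a_i}{2}\int_{a_i}^{b_i}|u'|^p w\,\de y$. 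Dividing through by $(b_i-a_i)$ converts the left-hand side into the average $\medint_{a_i}^{b_i}$ and yields the single-interval estimate
\begin{equation*}
\medint_{a_i}^{b_i}\left|u(\eta)-u(m_i)\right|^p(\wstar_p(\eta))^{p-1}\,\de\eta \leq \frac{1}{2}\int_{a_i}^{b_i}|u'(y)|^p w(y)\,\de y.
\end{equation*}

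It remains to sum over $i=1,\dots,N_w$. By the disjointness in \eqref{minassweight3} the sums of the right-hand sides reassemble into $\tfrac12\int_{I_{\Omega,w}}|u'|^p w\,\de y$, and when $N_w=\infty$ the exchange of sum and integral is handled by monotone convergence, using only that $u\in {\rm Dom}_w$ makes the total $\int_{I_{\Omega,w}}|u'|^p w\,\de y$ finite. The resulting inequality is in fact sharper than the one claimed (by a factor of $2$), so \eqref{poincareformula} follows \emph{a fortiori}. I do not anticipate any real obstacle: the analytic heart of the estimate has been carried out in Proposition \ref{poincarep}, and what is left here is Fubini on a nonnegative integrand plus a geometric bound on $(y-a_i)$ and $(b_i-y)$.
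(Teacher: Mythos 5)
Your proof is correct and follows essentially the same route as the paper: apply Proposition \ref{poincarep} with $x=\frac{a_i+b_i}{2}$, integrate in $\eta$ over each half-interval, normalize by the length, and sum over the disjoint components. The only difference is that the paper simply bounds $\int_\eta^{(a_i+b_i)/2}$ by $\int_{a_i}^{(a_i+b_i)/2}$ before integrating, whereas your Tonelli step retains the weights $(y-a_i)$ and $(b_i-y)$ and gains a harmless factor of $\tfrac12$.
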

\proof
Let us set $1\leq i \leq N_{w}$, and consider $x=\frac{a_i+b_i}{2}$ in \eqref{a1}. Then one gets
\begin{equation*}
\left|u(\eta)-u\left(\frac{a_i+b_i}{2}\right)\right|^{p}(\wstar_{p}(\eta))^{p-1}\leq
\int_{a_i}^{\frac{a_i+b_i}{2}}|u'(y)|^{p}w(y)\,\de y.
\end{equation*}
 Hence, by integrating with respect to $\eta$ gives that
\begin{equation*}
\int_{a_i}^{\frac{a_i+b_i}{2}}
\left|u(\eta)-u\left(\frac{a_i+b_i}{2}\right)\right|^{p}(\wstar_{p}(\eta))^{p-1}\,\de \eta\leq\frac{b_i-a_i}{2}
\int_{a_i}^{\frac{a_i+b_i}{2}}|u'(y)|^{p}w(y)\,\de y.
\end{equation*}
Further, by letting the same reasoning, we get
\begin{equation*}
\int_{\frac{a_i+b_i}{2}}^{b_i}
\left|u(\eta)-u\left(\frac{a_i+b_i}{2}\right)\right|^{p}(\wstar_{p}(\eta))^{p-1}\,\de \eta\leq\frac{b_i-a_i}{2}
\int_{\frac{a_i+b_i}{2}}^{b_i}|u'(y)|^{p}w(y)\,\de y.
\end{equation*}
Therefore, by combining both inequalities, we deduce that
\begin{equation*}
\int_{a_i}^{b_i}
\left|u(\eta)-u\left(\frac{a_i+b_i}{2}\right)\right|^{p}(\wstar_{p}(\eta))^{p-1}\,\de\eta\leq(b_i-a_i)
\int_{a_i}^{b_i}|u'(y)|^{p}w(y)\,\de y,
\end{equation*}
and thus
\begin{equation*}
\medint_{a_i}^{b_i}
\left|u(\eta)-u\left(\frac{a_i+b_i}{2}\right)\right|^{p}(\wstar_{p}(\eta))^{p-1}\,\de \eta\leq
\int_{a_i}^{b_i}|u'(y)|^{p}w(y)\,\de y.
\end{equation*}
Since $u\in {\rm{Dom}}_w$, then
\begin{equation*}
\sum_{i=1}^{N_{w}}
\int_{a_i}^{b_i}|u'(y)|^{p}w(y)\,\de y=\int_{I_{\Omega,w}}|u'(y)|^{p}w(y)\,\de y<+\infty,
\end{equation*}
and our conclusion follows.
\qed

In what follows, we consider the following set

\begin{align}\label{eq:spaceW}
W=W(\Omega, w):=\,{\rm{Dom}}_w\cap L^{p}(\Omega,(\wstar_{p})^{p-1}).
\end{align}
In the next proposition
we prove that $W$ endowed with a suitable norm is a Banach space and some related properties. 

\begin{prop} 
Let us consider $W$ be defined as in \eqref{eq:spaceW}, and endow it with the norm
\[
\|u\|_W\coloneqq \sqrt[p]{\|u\|_{L^{p}(I_{\Omega,w},(\wstar_{p})^{p-1})}^{p}+\|u'\|_{L^{p}(I_{\Omega,w},w)}^{p}}\quad\text{ if }u\in W\,.
\]
Then $(W,\|u\|_W)$ is a Banach space. Further, if $w$ is a finitely degenerate weight, then
\begin{equation}\label{LipdenseW}
\text{ $AC(\overline\Omega)$ is dense in $(W,\|\cdot\|_W)$} 
\end{equation}
in the following sense. For every $u\in W$ there exists a sequence $(u_h)_h\subset AC(\overline\Omega)$ such that
\begin{equation}\label{approxseqW}
\lim_{h\to\infty}u_h=\,u \text{ in } (W,\|\cdot\|_W)\,,
\end{equation}
that is,
\begin{equation}\label{convinWsplitted}
\lim_{h\to\infty}u_h=\,u \text{ in } L^{p}(\Omega,(\wstar_{p})^{p-1}),\text{ and }\lim_{h\to+\infty}u'_h=\,u' \text{ in } L^{p}(I_{\Omega,w},w)\,.
\end{equation}
\end{prop}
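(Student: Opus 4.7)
\textbf{(Banach space structure.)} For completeness, I would take a Cauchy sequence $(u_h)_h\subset W$. By completeness of the weighted Lebesgue spaces $L^{p}(\Omega,(\hat w_p)^{p-1})$ and $L^{p}(I_{\Omega,w},w)$, there exist $u$ and $v$ in these spaces with $u_h\to u$ and $u_h'\to v$. To identify $v=u'$ a.e.\ in $I_{\Omega,w}$, on every compact $K\subset (a_i,b_i)$ Proposition~\ref{propw}(ii) gives $\inf_K w>0$, whence $u_h\to u$ and $u_h'\to v$ in $L^{1}(K)$; a standard closure argument inside $W^{1,1}(K)$ yields $u\in W^{1,1}_{\mathrm{loc}}(I_{\Omega,w})$ with $u'=v$ a.e. Then $\int_{I_{\Omega,w}}|u'|^{p}w\,dx=\|v\|_{L^{p}(I_{\Omega,w},w)}^{p}<+\infty$, so $u\in W$ and $u_h\to u$ in $(W,\|\cdot\|_W)$.

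\textbf{(Density setup.)} For \eqref{LipdenseW} I fix $u\in W$ and approximate it in two stages. Stage~I (truncation): set $v_k\coloneqq (-k)\vee(u\wedge k)\in W\cap L^{\infty}$; one has $v_k'=u'\,\mathbf{1}_{\{|u|\le k\}}$ a.e., and dominated convergence with majorants $|u|^{p}(\hat w_p)^{p-1}$ and $|u'|^{p}w$ gives $v_k\to u$ in $W$. Stage~II (cutoff towards the bad endpoints): on each $(a_i,b_i)$ set
\[
\chi_i^{h}(x)\coloneqq \min\bigl(1,\,h\,\hat w_p(x)\bigr).
\]
By Proposition~\ref{propw} this is absolutely continuous on $[a_i,b_i]$ and $\chi_i^h\to 1$ pointwise as $h\to\infty$; at endpoints where Corollary~\ref{proppp1}(iii) applies, $\hat w_p$ has a positive boundary value, hence $\chi_i^{h}\equiv 1$ near that endpoint for $h$ large, whereas at endpoints falling in Corollary~\ref{proppp1}(ii), $\hat w_p$ vanishes and so does $\chi_i^{h}$. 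Consequently, the product $\chi_i^{h}v_k$ is bounded and admits the natural continuous extension (by $0$ at bad endpoints and by $v_k(a_i)$ at good ones), making it absolutely continuous on $[a_i,b_i]$. Because $N_w<+\infty$, I glue the finitely many pieces by linear interpolation on the gaps $\Omega\setminus\overline{I_{\Omega,w}}$, producing $u_{k,h}\in AC(\overline\Omega)$; this extension does not affect the $W$-norm, which only integrates over $I_{\Omega,w}$.

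\textbf{(Main estimate and closure.)} The heart of the proof, and the step I expect to be the main obstacle, is showing $\chi_i^{h}v_k\to v_k$ in $W$ as $h\to\infty$. The $L^{p}((\hat w_p)^{p-1})$-convergence follows by dominated convergence with the bounded majorant $|v_k|^{p}(\hat w_p)^{p-1}$ (using $v_k\in L^{\infty}$ and Proposition~\ref{propw}(v)). For the gradient term, the Leibniz rule gives
\[
(\chi_i^{h}v_k)' \;=\; (\chi_i^{h})'\,v_k \;+\; \chi_i^{h}\,v_k',
\]
whose second summand tends to $v_k'$ in $L^{p}((a_i,b_i),w)$ by dominated convergence. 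For the first summand, $(\chi_i^{h})'$ is supported on $\{\hat w_p<1/h\}$ with $(\chi_i^{h})'=h\,(\hat w_p)'$; using the identity $(\hat w_p)'=\hat w_p^{\,2}/w^{1/(p-1)}$ from Proposition~\ref{propw}(iii), the bound $|v_k|\le k$, and $h^{p}\hat w_p^{\,p}\le 1$ on the support, I would estimate
\[
\int_{a_i}^{b_i}\!\bigl|(\chi_i^{h})'v_k\bigr|^{p}\,w\,dx
\;\le\; k^{p}h^{p}\!\!\int_{\{\hat w_p\le 1/h\}}\!\!\hat w_p^{\,2p-2}(\hat w_p)'\,dx
\;\le\; k^{p}\!\!\int_{\{\hat w_p\le 1/h\}}\!\!\hat w_p^{\,p-2}(\hat w_p)'\,dx
\;\le\; \frac{k^{p}}{(p-1)\,h^{\,p-1}},
\]
where the last step is the fundamental theorem of calculus applied to the $AC$ function $\hat w_p^{\,p-1}/(p-1)$ on each monotonicity region, using $\hat w_p(a_i)=0$ (resp.\ $\hat w_p(b_i)=0$) at every bad endpoint. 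This bound vanishes as $h\to\infty$, so $\chi_i^{h}v_k\to v_k$ in $W$. A standard diagonal extraction from the two-parameter family $u_{k,h}$ then yields a sequence in $AC(\overline\Omega)$ converging to $u$ in $(W,\|\cdot\|_W)$, which is exactly \eqref{approxseqW}--\eqref{convinWsplitted}.
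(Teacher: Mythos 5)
Your completeness argument and your two-stage density scheme (truncation, then the multiplicative cutoff $\chi_i^{h}=\min(1,h\wstar_p)$) are a genuinely different route from the paper, which instead builds approximants as primitives of $C^0_c$-approximations of $u'$, only proves \emph{boundedness} of the derivatives, and then invokes reflexivity and Mazur's lemma; your core estimate
$\int|(\chi_i^{h})'v_k|^{p}w\,\de x\le k^{p}(p-1)^{-1}h^{-(p-1)}$, resting on $(\wstar_p)'=\wstar_p^{2}/w^{1/(p-1)}$ and $h^p\wstar_p^p\le1$ on the support, is correct (modulo absolute values on $(\wstar_p)'$ in the decreasing region) and gives strong convergence directly, which is cleaner. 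However, there is a genuine gap in the gluing step. When two components of $I_{\Omega,w}$ are adjacent, $b_i=a_{i+1}$, there is no gap on which to interpolate, and maximality of the components only forces \emph{at least one} of the two one-sided integrals $\int_{b_i-\epsilon}^{b_i}w^{-1/(p-1)}$, $\int_{a_{i+1}}^{a_{i+1}+\epsilon}w^{-1/(p-1)}$ to diverge. In the mixed case (say the left side bad, the right side good) your construction forces $\chi_i^{h}v_k\to0$ as $x\to b_i^-$, while on the right $\chi_{i+1}^{h}\equiv1$ near $a_{i+1}$ for large $h$ and $v_k$ has a finite, generically nonzero trace $v_k(a_{i+1}^+)$ by Corollary \ref{proppp1}(iii). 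The juxtaposed function then has a jump at $b_i=a_{i+1}$ and is not in $AC(\overline\Omega)$, so \eqref{approxseqW} fails for that candidate sequence. This is precisely the sub-case ($b_1=a_2$) to which the paper devotes most of its effort. The repair is not hard — near such an endpoint replace $\chi_i^{h}v_k$ by $\chi_i^{h}v_k+(1-\chi_i^{h})\,v_k(a_{i+1}^+)$, the extra derivative term $-(\chi_i^{h})'v_k(a_{i+1}^+)$ being controlled by the very same estimate since $|v_k(a_{i+1}^+)|\le k$ — but as written the step fails.

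A secondary point: you assert that $\chi_i^{h}v_k$, extended by $0$ at a bad endpoint, is absolutely continuous on $[a_i,b_i]$, but continuity at the endpoint plus local absolute continuity in the open interval is not enough; you also need $(\chi_i^{h}v_k)'\in L^1$ up to the endpoint. The term $(\chi_i^{h})'v_k$ is fine ($|(\chi_i^{h})'v_k|\le kh|(\wstar_p)'|$, whose integral telescopes to $kh\cdot h^{-1}$), but the term $\chi_i^{h}v_k'$ requires an argument, since $w^{-1/(p-1)}$ is \emph{not} integrable near a bad endpoint and the naive H\"older bound breaks down. It does hold, via
\begin{equation*}
\int_{a_i}^{t}\wstar_p\,|u'|\,\de x\le\Big(\int_{a_i}^{t}|u'|^{p}w\,\de x\Big)^{1/p}\Big(\int_{a_i}^{t}\wstar_p^{\,p'-2}(\wstar_p)'\,\de x\Big)^{1/p'}
=\Big(\int_{a_i}^{t}|u'|^{p}w\,\de x\Big)^{1/p}\Big(\tfrac{\wstar_p(t)^{p'-1}}{p'-1}\Big)^{1/p'},
\end{equation*}
again using $\wstar_p^{p'}w^{-1/(p-1)}=\wstar_p^{p'-2}(\wstar_p)'$; you should include this verification.
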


\begin{proof} Let us first prove that $W$ is a Banach space.
Suppose that $(u_h)_h\subset (W, \|\cdot\|_W)$  is a Cauchy sequence. Hence, by definition, we have that $(u_h)_h\subset L^{p}(I_{\Omega,w},(\wstar_{p})^{p-1})$, and $(u'_h)_h\subset L^{p}(I_{\Omega,w}, w)$ are Cauchy sequences.  Since $L^{p}(I_{\Omega,w},(\wstar_{p})^{p-1})$, and $L^{p}(I_{\Omega,w},w)$ are complete spaces, it follows that there exist $u\in L^{p}(I_{\Omega,w},(\wstar_{p})^{p-1})$, and $v\in L^{p}(I_{\Omega,w},w)$ such that,
\begin{equation}\label{CsW}
u_h\to u\text{ in  }L^{p}(I_{\Omega,w},(\wstar_{p})^{p-1}),\text{ and }u'_h\to v\text{ in  }L^{p}(I_{\Omega,w},w)\,,
\end{equation}
as $h\to+\infty$. In what follows, we need to prove that for each $i=1,\dots,N_w$,
\begin{equation}\label{uinD*}
u\in AC_{\rm loc}((a_i,b_i)) \text{ and } u'=\,v\text{ a.e.  in } (a_i,b_i)\,,
\end{equation}
and from which we conclude that $u\in {\rm{Dom}}_w$, and thus proving that $W$ is a Banach space.  Let us fix $i=1,\dots,N_w$, and let $a_i<\,\alpha<\,\beta<\,b_i$.  Since $w^{-\frac{1}{p-1}}\in L_{\rm loc}^{1}(I_{\Omega,w})$, together to Proposition \ref{propw} (ii), it follows that

\begin{equation}\label{w1inL1}
\frac{1}{\wstar_{p}}\in L^1( (\alpha,\beta)).
\end{equation}
Then by \eqref{w1inL1}, we get the continuous inclusions
\begin{equation}\label{inclusion}
L^{p}((\alpha,\beta),(\wstar_{p})^{p-1})\subset L^1((\alpha,\beta)),\text{ and } L^{p}((\alpha,\beta), w)\subset L^1((\alpha,\beta))\,.
\end{equation}
Indeed, if $f\in L^{p}((\alpha,\beta),(\wstar_{p})^{p-1})$, then

\begin{align*}
\int_{\alpha}^{\beta}\vert f(x)\vert \de x\leq \left(\int_{\alpha}^{\beta}\vert f(x) \vert^{p}(\wstar_{p}(x))^{p-1} \de x\right)^{\frac{1}{p}}\left( \int_{\alpha}^{\beta}(\wstar_{p}(x))^{-1}
\de x\right)^{\frac{1}{p\prime}}<+\infty,
\end{align*}
which proves \eqref{inclusion}. Therefore, by \eqref{CsW}
\begin{equation*}
u_h\to u\text{ in  }L^1((\alpha,\beta)),\text{ and }u'_h\to v\text{ in  }L^1((\alpha,\beta))\,,
\end{equation*}
and thus 
\[
u\in AC([\alpha,\beta]), \text{ and } u'=\,v\text{ a.e.  in } [\alpha,\beta]\,,
\]
from which  we conclude that \eqref{uinD*} holds true.

\bigskip
 We now need to show that \eqref{LipdenseW} is true. To this end, it is enough to show that for each $u\in W$, there exists a sequence $(\bar u_h)_h\subset AC(\overline\Omega)$ such that 
\begin{equation}\label{recovsequbd}
\bar u_h\rightarrow u\text{ in }L^{p}(\Omega,(\wstar_{p})^{p-1})\text{ and } (\bar u'_h)_h\text{ bounded in } L^{p}(I_{\Omega,w},w)\,.
\end{equation}
In fact, by \eqref{recovsequbd} and the fact that $W$ is a reflexive space, up to a subsequence, we can assume that
\begin{equation*}
\bar u_h\rightarrow u\text{ in $W$-weak}\,.
\end{equation*}
Thanks to Mazur's lemma, there exists a sequence $(u_h)_h$ such that, for each $h$, there exist real numbers $c_{h,j}\in\R$  and $h_j\in\bbN$ for $j=1,\dots, m_h$, with
 \begin{equation*}
 \sum_{j=1}^{m_h} c_{h,j}=1\text{ and } u_h=\,\sum_{j=1}^{m_h} c_{h,j}u_{h_j}\in AC(\overline\Omega)\,,
 \end{equation*}
such that \eqref{approxseqW} holds true. Thanks to \eqref{LipdenseW}, we only need to show that \eqref{recovsequbd} is true. 

\bigskip
Let us consider $u\in W$, so that $u'\in L^{p}(I_{\Omega,w},w)$. It is well-known that there exists a sequence of functions $(v_h)_h\subset C^0_c(I_{\Omega,w})\subset L^{p}(I_{\Omega,w},w)$ such that
\begin{equation}\label{nuova1}
\|v_h-u'\|^{p}_{L^{p}(I_{\Omega,w},w)}=\,\sum_{i=1}^{N_w} \int_{a_i}^{b_i}|v_h-u'|^{p}\,w\,\de x\to 0,\text{ as }h\to +\infty\,.
\end{equation}
In what follows, we define for each $h\in\bbN$, the function $\tilde{u}^{(i)}_h:(a_i,b_i)\to\R$, $i=1,2,\dots,h$ as
\begin{equation}\label{nuova2}
\tilde{u}^{(i)}_h(x) :=
u\left(\frac{a_i+b_i}{2}\right)-\int_x^{\frac{a_i+b_i}{2}} v_h(y)\,\de y\,,\ \quad x\in (a_i,b_i).
\end{equation}
We now split the proof in three cases.
\\ 
{\bf 1st case.} In this case, we suppose that $N_w=1$, and thus $I_{\Omega,w}=\,(a_1,b_1)$.  Consider $(\tilde{u}^{(1)}_h)_{h\in\bbN}$ be the sequence defined in \eqref{nuova2} for $i=1$, and let $u_{h}\coloneqq \bar u_h:\,(a,b)\to\R$ be defined as
\[
\bar u_h(x)\coloneqq
\begin{cases}
\tilde{u}^{(1)}_h(a_1)&\text{ if }x\in (a,a_1]\\
\tilde{u}^{(1)}_h(x)&\text{ if }x\in (a_1,b_1)\\
\tilde{u}^{(1)}_h(b_1)&\text{ if }x\in [b_1,b)\,.
\end{cases}
\]
Then  by definition, we notice that see that $(\bar{u}_h)_h\subset C^1([a,b])\subset AC([a,b])$, and  thus \eqref{approxseqW} holds true.  In the next, we will prove that \eqref{recovsequbd} is truth. Since \eqref{nuova1}, and \eqref{nuova2} hold truth, we get that the sequence $(\bar u'_h)_h$ is bounded in $L^{p}(I_{\Omega,w},w)$\,. We now show the following:
\begin{equation}\label{CCCf}
\int_{a}^{b}|\tilde u_h-u|^{p}\,(\wstar_{p})^{p-1}\,\de x\to 0\text{ as }h\to \infty\,.
\end{equation}
Indeed, since $\wstar_{p}\equiv 0$ in $\Omega\setminus I_{\Omega,w}$, we have that
\begin{align*}
\int_{a}^{b}|\tilde u_h-u|^{p}\,(\wstar_{p})^{p-1}\,\de x=
\int_{a_1}^{b_1}|\tilde u_h-u|^{p}\,(\wstar_{p})^{p-1}\,\de x.
\end{align*}
Hence, by our Poincar\'e inequality \eqref{poincareformula} with $\tilde{u}_{h}-u$ instead of $u$, and  due to $\tilde{u}_h\left(\frac{a_1+b_1}{2}\right)=\,u\left(\frac{a_1+b_1}{2}\right)$, we deduce that
\begin{equation*}
\begin{split}
\int_{a_1}^{b_1}|\tilde u_h-u|^{p}\,(\wstar_{p})^{p-1}\,\de x
&\leq \,\int_{I_{\Omega,w}}|\tilde u'_h -u' |^{p}\,w\,\de x
\\
&=\,\int_{I_{\Omega,w}}|v_h -u' |^{p}\,w\,\de x
\,.
\end{split}
\end{equation*}
 Therefore, by  applying \eqref{nuova1},  our desired conclusion \eqref{CCCf} follows. We now proceed with the second case.
 \\ 
 {\bf 2nd case.}  Let us suppose that $N_w=2$. Then $I_{\Omega,w}=\,(a_1,b_1)\cup (a_2,b_2)$. We suppose without loss of generality that $b_1\leq a_2$ and we distinguish the cases $b_1<a_2$ and $b_1=a_2$. Suppose that $(\tilde{u}^{(i)}_h)_h$ is the sequence as defined in \eqref{nuova2} for $i=1,2$. 
 
 \bigskip
 If $b_1<a_2$ , for each $h\in\bbN$, let us take $u_h=\bar u_h:\,\Omega\to\R$ be defined as
\[
\bar u_h(x)\coloneqq
\begin{cases}
\tilde{u}^{(1)}_h(a_1)&\text{ if }x\in [a,a_1)\\
\tilde{u}^{(1)}_h(x)&\text{ if }x\in [a_1,b_1)\\
\frac{\tilde{u}^{(2)}_h(a_2)-\tilde{u}^{(1)}_h(b_1)}{a_2-b_1}(x-b_1)+\tilde{u}^{(1)}_h(b_1)&\text{ if }x\in [b_1,a_2)\\
\tilde{u}^{(2)}_h(x)&\text{ if }x\in [a_2,b_2)\\
\tilde{u}^{(2)}_h(b_2)&\text{ if }x\in [b_2,b]\,.
\end{cases}
\]
It is customary to demonstrate that $(u_{h})_{h} \subset AC([a,b])$, ensuring the validity of \eqref{approxseqW}. In fact, we can derive \eqref{recovsequbd} by employing a strategy similar to the one used in the first case, while also noting that $\wstar_{p}\equiv 0$ in $\Omega\setminus I_{\Omega,w}$. 

\bigskip
In what follows, we consider the second subcase $b_1=a_2$. Let $h\in \bbN$ such that 
 $$
 \frac1h<\min\left\{\frac{b_i-a_i}4:i=1,2\right\}.
 $$
 Let $u_h=\bar u_h:\,\Omega\to\R$ be defined as
\[
 \bar u_h(x)\coloneqq
\begin{cases}
\tilde{u}^{(1)}_h(a_1)&\text{ if }x\in [a,a_1),\\
\tilde{u}^{(1)}_h(x)&\text{ if }x\in [a_1,\frac{a_1+b_1}2),\\
u(x)&\text{ if }x\in [\frac{a_1+b_1}2,b_1-\frac1h),\\
u(x)\frac{\sqrt[p]{\wstar_{p}(x)}}{\sqrt[p]{\wstar_{p}(b_1-\frac1h)}}&\text{ if }x\in [b_1-\frac1h,b_1),\\u(x)\frac{\sqrt[p]{\wstar_{p}(x)}}{\sqrt[p]{\wstar_{p}(a_2+\frac1h)}}&\text{ if }x\in [a_2,a_2+\frac1h),\\
u(x)&\text{ if }x\in [a_2+\frac1h,\frac{a_2+b_2}2),\\
\tilde{u}^{(2)}_h(x)&\text{ if }x\in [\frac{a_2+b_2}2,b_2),\\
\tilde{u}^{(2)}_h(b_2)&\text{ if }x\in [b_2,b]\,.
\end{cases}
\]
Observe that $( u_h)_h\subset AC([a,b])$, and  thus \eqref{approxseqW} can be obtained. We now  need to show \eqref{recovsequbd}. To do that, we prove that
\begin{equation}\label{CCCfhhhh}
\int_{\frac{a_1+b_1}2}^{b_1}|\tilde u_h-u|^{p}\,(\wstar_{p})^{p-1}\,\de x\to 0\text{ as }h\to +\infty\,,
\end{equation}
and 
\begin{equation}\label{CCCfhhhhvvv}
\int_{\frac{a_1+b_1}2}^{b_1}|\tilde u_h^\prime|^{p}\,w\,\de x\leq C<+ \infty\,.
\end{equation}
Indeed, we have
\begin{equation*}
\int_{\frac{a_1+b_1}2}^{b_1}|\tilde u_h-u|^{p}\,(\wstar_{p})^{p-1}\,\de x=
\int_{b_1-\frac1h}^{b_1}u^{p}\left(1-\frac{\sqrt[p]{\wstar_{p}(x)}}{\sqrt[p]{\wstar_{p}(b_1-\frac1h)}}\right)^{p}\,(\wstar_{p})^{p-1}\,\de x.
\end{equation*}
Then by Proposition \ref{propw} the weight $\wstar_{p}$ is decreasing in $[{{a_1+3b_1}\over{4}},b_1]$, and thus
\begin{equation}
\label{bound}
\frac{{\wstar_{p}(x)}}{{\wstar_{p}(b_1-\frac1h)}}\leq 1,\qquad x\in \left({b_1-\frac1h},b_1\right).
\end{equation}
 From here, we obtain that
\begin{equation*}
\int_{\frac{a_1+b_1}2}^{b_1}|\tilde u_h-u|^{p}\,(\wstar_{p})^{p-1}\,\de x\leq
\int_{b_1-\frac1h}^{b_1}u^{p}\,(\wstar_{p})^{p-1}\,\de x
\to 0\text{ as }h\to \infty\,.
\end{equation*}
Since the last conclusion can be also deduced in the interval $(a_2,\frac{a_2+b_2}2)$, then \eqref{CCCfhhhh} holds true.  We now need to prove \eqref{CCCfhhhhvvv}. Notice that
\[
 \bar u_h'(x):=
\begin{cases}
u'(x)&\text{ if }x\in [\frac{a_1+b_1}2,b_1-\frac1h)\\
\frac{1}{\sqrt[p]{\wstar_{p}(b_1-\frac1h)}}
\left(u'(x)\sqrt[p]{\wstar_{p}(x)}+\frac{u(x)(\wstar_{p})'(x) (\sqrt[p\prime]{\wstar_{p}(x)})^{-1}}{p}
\right)
&\text{ if }x\in [b_1-\frac1h,b_1).
\end{cases}
\]
Therefore

\begin{align*}
\int_{\frac{a_1+b_1}2}^{b_1}|\bar u_h^\prime|^{p}\,w\,\de x&=\int_{\frac{a_1+b_1}2}^{b_1-\frac1h}|u^\prime|^{p}\,w\,\de x+\\\nonumber
&\begin{aligned}
&+\int_{b_1-\frac1h}^{b_1}\frac{1}{\wstar_{p}(b_1-\frac1h)}
\left(u'\sqrt[p]{\wstar_{p}}+\frac{u(\wstar_{p})'(\sqrt[p\prime]{\wstar_{p}})^{-1}}{p}
\right)
^{p}\,w\,\de x
\end{aligned}\\\nonumber
&\leq\int_{\frac{a_1+b_1}2}^{b_1}|u^\prime|^{p}\,w\,\de x+
2^{p-1}\Bigg(
\int_{\frac{a_1+b_1}2}^{b_1}|u'|^{p}\frac{\wstar_{p}}{\wstar_{p}(b_1-\frac1h)}w\,\de x\\\nonumber
&+\frac{1}{p^{p}}\int_{b_1-\frac1h}^{b_1}\frac{1}{\wstar_{p}(b_1-\frac1h)}u^{p}|(\wstar_{p})'|
^{p} (\wstar_{p})^{-\frac{p}{p\prime}}\,w\,\de x\Bigg).
\end{align*}
Since \eqref{bound} holds true, we can then conclude that the second integral is finite.  Now, let us prove that the last integral is also finite.  Now, since $u^{p}(\wstar_{p})^{p-1}$ is bounded in $(b_1-1/h,b_1)$ (see Corollary 4.6 (i)) and $\wstar_{p}(b_1)=0$,  we have that

\begin{align*}
\int_{b_1-\frac1h}^{b_1}\frac{1}{\wstar_{p}(b_1-\frac1h)}u^{p}|(\wstar_{p})'|^{p}\frac{w}{(\wstar_{p})^{\frac{p}{p\prime}}}\,\de x
&=\int_{b_1-\frac1h}^{b_1}\frac{1}{\wstar_{p}(b_1-\frac1h)}u^{p}\left(\frac{(\wstar_{p})^{2}}{w^{\frac{1}{p-1}}}\right)^{p}\frac{w}{(\wstar_{p})^{\frac{p}{p\prime}}}
\,\de x\\
&=\int_{b_1-\frac1h}^{b_1}\frac{1}{\wstar_{p}(b_1-\frac1h)}u^{p}(\hat{w}_{p})^{p-1}\frac{(\hat{w}_{p})^{2}}{w^{\frac{1}{p-1}}}\,\de x\\
&\leq \frac{C_{p}}{\wstar_{p}(b_1-\frac1h)}\int_{b_1-\frac1h}^{b_1}\frac{(\hat{w}_{p})^{2}}{w^{\frac{1}{p-1}}}\,\de x.\\
\end{align*}
Now, by \eqref{derwstar} we have
\begin{align*}
(\wstar_{p})^\prime
={{\left(\wstar_{p}\right)^2}\over{w^{\frac{1}{p-1}}}} \quad\text{ a.e. in }\left(b_1-\frac1h,b_1\right),
\end{align*}
and then
\begin{align*}
\int_{b_1-\frac1h}^{b_1}\frac{1}{\wstar_{p}(b_1-\frac1h)}u^{p}|(\wstar_{p})'|^{p}\frac{w}{(\wstar_{p})^{\frac{p}{p\prime}}}\,\de x\leq & \frac{C_{p}}{\wstar_{p}(b_1-\frac1h)}\int_{b_1-\frac1h}^{b_1}|(\wstar_{p})'|\,\de x
\\
&= \frac{C_{p}}{\wstar_{p}(b_1-\frac1h)}\left[\wstar_{p}\left(b_1-\frac1h\right)-\wstar_{p}(b_1)\right]= C_{p}.
\end{align*}
Finally, we address the last case.\\
{\bf 3rd case.} In the general case where $I_{\Omega,w}=\bigcup_{i=1}^{N_w}(a_i,b_i)$ with $b_i\leq a_{i+1}$ for every $i=1,\dots,N_w-1$, it is sufficient to replicate the arguments of the 2nd case for each $i=1,\dots,N_w-1$.
\end{proof}

\section{Relaxation for finitely degenerate weights}\label{relaxp}
We consider $X=L^{p}(\Omega,(\wstar_{p})^{p-1})$ where $\wstar_{p}$ is the weight as defined in \eqref{pesop}. Let us set

\begin{equation*}
\!\!\!\!\!\! \!\!\! \!\!\!\!\!\! \! F(u)\coloneqq
\begin{cases}
\displaystyle\int_{a}^b |u'|^{p}\,w\,\de x &\text{ if } u\in {\rm AC}([a,b])\\
+\infty & \text{ if } u\in X\setminus {\rm AC}([a,b]) 
\end{cases}
\end{equation*}
and consider  the lower semicontinuous envelopes w.r.t.  $L^{p}((\wstar_{p})^{p-1})$-convergence, that is
$$
\overline{F}(u)={\rm sc^-}(L^{p}((\wstar_{p})^{p-1}))-F(u).
$$
We set
$$
D\coloneqq\{u\in L^{p}((a,b),(\wstar_{p})^{p-1}): \overline{F}(u)<+\infty
\}\,.
$$
Let us recall that, if $I_{\Omega, w}=\emptyset$, then $\wstar_{p}\equiv 0$ (see Proposition \ref{propw} (i)). This implies that $L^{p}((a,b),(\wstar_{p})^{p-1})=\{0\}$, $D=\{0\}$ and $\overline{F}(u)=0$. In the next theorem, we state an explicit formula for the relaxed functional $\overline{F}$ with respect to the chosen convergence.

\begin{teo} \label{mainfinitelocsum} 
Suppose that   $w$ is a finitely degenerate weight.
Then
\begin{equation*}
D={\rm{Dom}}_w
\end{equation*}
where ${\rm{Dom}}_w$ is defined by \eqref{domw}
and the following representation holds for the relaxed functional
\begin{equation*}
\overline{F}(u)=
\begin{cases}
\displaystyle\int_{I_{\Omega, w}} |u'|^{p}\,w\,\de x &\text{ if } u\in {\rm{Dom}}_w\\
+\infty & \text{ if } u\in L^{p}(\Omega,(\wstar_{p})^{p-1})\setminus {\rm{Dom}}_w .
\end{cases}
\end{equation*}  
\end{teo}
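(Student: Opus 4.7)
The plan is to prove the claim by a double inclusion: the $\liminf$ inequality will establish $D \subseteq {\rm{Dom}}_w$ together with the lower bound $\overline{F}(u) \ge \int_{I_{\Omega,w}} |u'|^p\, w\, dx$, while the construction of a recovery sequence will provide the reverse inclusion and the matching upper bound. Both steps lean on the machinery developed in Section~\ref{sec:3}: Lemma~\ref{lemma2} for the $\liminf$, and Corollary~\ref{proppp1} together with the density result \eqref{LipdenseW} for the recovery.

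\textbf{Lower bound.} Fix $u \in D$; by the definition of the relaxation I would select $(u_h) \subset AC([a,b])$ with $u_h \to u$ in $L^p(\Omega,(\wstar_{p})^{p-1})$ and $\lim_h F(u_h) = \overline{F}(u) < \infty$. Since $w \ge 0$, the bound $F(u_h) \le C$ immediately gives condition~(a) of Lemma~\ref{lemma2}. For condition~(b), I would invoke Proposition~\ref{propw}~(ii): on any compact subinterval $[\alpha,\beta] \subset (a_i,b_i)$ the weight $\wstar_{p}$ is bounded below by a positive constant, so convergence in $L^p((\wstar_{p})^{p-1})$ forces $L^p([\alpha,\beta])$-convergence and, along a subsequence, a.e.\ convergence. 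Choosing a point $c_i \in (a_i,b_i)$ where this pointwise convergence holds supplies the values $d_i$ required by the lemma. Applying Lemma~\ref{lemma2} then yields a sub-subsequence $(u_{h_k})$ and a function $\tilde u \in {\rm{Dom}}_w$ with
\[
\int_{I_{\Omega,w}} |\tilde u'|^p\, w\, dx \le \liminf_{k} F(u_{h_k}) = \overline{F}(u).
\]
A further subsequence converges a.e.\ on $I_{\Omega,w}$, so $\tilde u = u$ a.e.\ there; identifying $u$ with $\tilde u$ gives $u \in {\rm{Dom}}_w$ and the desired lower bound.

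\textbf{Upper bound and main obstacle.} For the reverse direction, let $u \in {\rm{Dom}}_w$. Because $w$ is finitely degenerate, Corollary~\ref{proppp1}~(i) places $u$ in $L^p(\Omega,(\wstar_{p})^{p-1})$, so $u \in W$. The density statement \eqref{LipdenseW} then produces $(u_h) \subset AC(\overline\Omega)$ with $u_h \to u$ in $L^p(\Omega,(\wstar_{p})^{p-1})$ and $u_h' \to u'$ in $L^p(I_{\Omega,w},w)$, from which $\int_{I_{\Omega,w}} |u_h'|^p w\, dx \to \int_{I_{\Omega,w}} |u'|^p w\, dx$. The hard part, which the density proposition does not directly yield, is to show that
\[
\int_{\Omega \setminus I_{\Omega,w}} |u_h'|^p w\, dx \longrightarrow 0.
\]
To achieve this one has to revisit the explicit recovery sequence: $u_h$ is already constant on the outer intervals $[a,a_1]$ and $[b_{N_w},b]$, so these contribute nothing, while on each internal gap $[b_i,a_{i+1}]$ the interpolation must be analyzed, or suitably refined, to show that its energy vanishes as $h \to \infty$, exploiting that $w^{-1/(p-1)}$ fails to be locally integrable at every point of the gap. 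Once this is in place, $\limsup_h F(u_h) \le \int_{I_{\Omega,w}} |u'|^p w\, dx$, which gives both $u \in D$ and the upper bound, and completes the identification $D = {\rm{Dom}}_w$ together with the stated formula for $\overline{F}$.
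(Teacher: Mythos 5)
Your proposal follows the same two--step strategy as the paper: the inclusion $D\subseteq{\rm{Dom}}_w$ and the lower bound come from the fundamental convergence Lemma~\ref{lemma2}, and the upper bound comes from Corollary~\ref{proppp1}~(i) (which gives ${\rm{Dom}}_w=W$) together with the density~\eqref{LipdenseW}. You are in fact more careful than the paper in the $\liminf$ step, where you explain how to produce the points $c_i$ required by hypothesis~(b) of Lemma~\ref{lemma2} from the $L^{p}((\wstar_{p})^{p-1})$-convergence; the paper omits this. The one step you leave open --- that $\int_{\Omega\setminus I_{\Omega,w}}|u_h'|^{p}w\,\de x\to 0$ --- is a real issue which the paper itself elides (it bounds $\overline F(u_h)$ by $\int_{I_{\Omega,w}}|u_h'|^{p}w\,\de x$ rather than by $F(u_h)=\int_a^b|u_h'|^{p}w\,\de x$), but note that your suggested justification does not work as stated for the linear interpolation used on a gap $(b_i,a_{i+1})$: non-local-integrability of $w^{-1/(p-1)}$ there does not force $w$ to vanish a.e.\ (take $w=\chi_E$ with $E$ and its complement of positive measure in every subinterval), so the interpolation's energy need not tend to zero. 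What does work is to replace the linear interpolation by a near-minimizer of the connection energy: since $\int_{b_i}^{a_{i+1}}w^{-1/(p-1)}\,\de x=+\infty$, setting $w_n=\max(w,1/n)$ and $M_n=\int_{b_i}^{a_{i+1}}w_n^{-1/(p-1)}\,\de x\to\infty$, the Lipschitz function $g_n(x)=A+(B-A)M_n^{-1}\int_{b_i}^{x}w_n^{-1/(p-1)}(y)\,\de y$ joins the prescribed boundary values $A,B$ with $\int_{b_i}^{a_{i+1}}|g_n'|^{p}w\,\de x\le |B-A|^{p}M_n^{1-p}\to 0$. With this modification your recovery sequence is complete and the argument coincides with the paper's.
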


\begin{proof} Note that  by  Lemma \ref{lemma2} and  Proposition \ref{poincarep},  we  deduce that  $D\subseteq {\rm{Dom}}_w$. Furthermore, for every $u\in D$ one gets
$$
u\in W^{1,1}_{\text {loc}}(I_{\Omega, w})\cap L^{p}(I_{\Omega, w}, (\wstar_{p})^{p-1}),\ \ u^{p}(\wstar_{p})^{p-1}\in L^\infty(I_{\Omega, w})\,.$$ 
In the next, we show that for every $u\in L^{p}(\Omega,(\wstar_{p})^{p-1})$
$$
\displaystyle\int_{I_{\Omega, w}} |u' |^{p}\,w\,\de x\leq\overline{F}(u).
$$
By the definition of  $\overline{F}$, we directly suppose that $\overline{F}(u)<+\infty$. Therefore there exists a sequence $(u_h)\subset {\rm{Dom}}_w$ such that 
$u_h\to u$ in $L^{p}(\Omega,(\wstar_{p})^{p-1})$ and
$$
\overline{F}(u)=\lim_{h\to +\infty}F(u_h)=\lim_{h\to +\infty}\displaystyle\int_{\Omega} |u_h' |^{p}\,w\,\de x.
$$
Then, thanks to Lemma \ref{lemma2} we get up to extracting a subsequence that
$$
\displaystyle\int_{\Omega} |u' |^{p}\,w\,\de x\leq\liminf_{h\to +\infty}\displaystyle\int_{\Omega} |u_h' |^{p}\,w\,\de x=\lim_{h\to +\infty}\displaystyle\int_{\Omega} |u_h' |^{p}\,w\,\de x=\overline{F}(u)
$$
and we are done. To conclude, it remains to prove that
\begin{equation}\label{main1bb1}
\overline{F}(u)\le\,\displaystyle\int_{I_{\Omega, w}} |u' |^{p}\,w\,\de x,\quad\forall u\in {\rm{Dom}}_w\,
\end{equation}
and thus ${\rm{Dom}}_w\subseteq D.$ In fact, this is a consequence of \eqref{LipdenseW}.  Indeed, by  property (i) in Corollary \ref{proppp1}  we have that that ${\rm{Dom}}_w\subset\,L^{p}(\Omega,(\wstar_{p})^{p-1})$. Thus,
if $u\in W={\rm{Dom}}_w\cap L^{p}(\Omega,(\hat{w}_{p})^{p-1})=\,{\rm{Dom}}_w$, by \eqref{LipdenseW}, there exists a sequence $(u_h)_h\subset AC([a,b])$ such that
\eqref{convinWsplitted} holds true. Hence, from \eqref{convinWsplitted}, one has that
\begin{equation*}
\begin{split}
\overline{F}(u)&\le\,\liminf_{h\to\infty}{\overline{F}}(u_h)\le\,\lim_{h\to\infty}\int_{I_{\Omega, w}} |u'_h |^{p}\,w\,\de x= \, \int_{I_{\Omega, w}} |u' |^{p}\,w\,\de x\,,
\end{split}
\end{equation*}
and thus \eqref{main1bb1} holds true.
 \end{proof}
 
We consider the following functionals defined on the space $L^p(\Omega,(\wstar_{p})^{p-1})$
\begin{equation*}
\!\!\!\!\!\! \!\!\! \!\!\!\!\!\! \!\!\!   F^1(u):=
\begin{cases}
\displaystyle\int_{a}^b |u'|^p\,w\,dx &\text{ if } u\in C^1([a,b]),\\
+\infty & \text{ if } u\in L^p(\Omega,(\wstar_{p})^{p-1})\setminus C^1([a,b]) ,
\end{cases}
\end{equation*}
\begin{equation*}
\!\!\!\!\!\! \!\!\! \!\!\!\!\!\! \!  F^2(u):=
\begin{cases}
\displaystyle\int_{a}^b |u'|^p\,w\,dx &\text{ if } u\in {\rm Lip}([a,b]),\\
+\infty & \text{ if } u\in L^p(\Omega,(\wstar_{p})^{p-1})\setminus {\rm Lip}([a,b]) ,
\end{cases}
\end{equation*}
\begin{equation*}
\!\!\!\!\!\! \!\!\! \!\!\!\!\!\! \!\!  F^3(u)=
\begin{cases}
\displaystyle\int_{a}^b |u'|^p\,w\,dx &\text{ if } u\in H^1((a,b)),\\
+\infty & \text{ if } u\in L^p(\Omega,(\wstar_{p})^{p-1})\setminus H^1((a,b)),
\end{cases}
\end{equation*}  
and the corresponding lower semicontinuous envelopes w.r.t. the $L^p(\Omega,(\wstar_{p})^{p-1})$-convergence 
\begin{equation}
\label{newrelaxfunct}
\overline {F^j}(u)={\rm sc^-}(L^p(\Omega,(\wstar_{p})^{p-1}))-F_j(u)\ \ \ \ j=1,2,3\,.
\end{equation}
\begin{cor}
For every $u\in L^p(\Omega,(\wstar_{p})^{p-1})$ we have
$$
\overline{F^1}(u)=\overline{F^2}(u)=\overline{F^3}(u)=\overline{F}(u),
$$
where
$\overline{F^j}(u)$, $j=1,2,3$ are the functionals in \eqref{newrelaxfunct}.
\end{cor}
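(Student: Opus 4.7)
The plan is to prove the corollary by establishing the chain $\overline{F^1} \geq \overline{F^2} \geq \overline{F^3} \geq \overline{F}$ on all of $L^p(\Omega,(\wstar_p)^{p-1})$, and then the single reverse inequality $\overline{F^1} \leq \overline{F}$, which together force all four envelopes to coincide. The chain follows from the pointwise ordering $F^1 \geq F^2 \geq F^3 \geq F$: on a bounded interval one has $C^1([a,b]) \subset \mathrm{Lip}([a,b]) \subset H^1((a,b)) \subset AC([a,b])$, so each functional agrees with $F$ on its natural domain and equals $+\infty$ elsewhere, and the lower semicontinuous envelope is monotone with respect to the functional.

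For the reverse inequality I restrict to $u$ with $\overline{F}(u) < +\infty$, so that by Theorem \ref{mainfinitelocsum} we have $u \in {\rm{Dom}}_w$ and $\overline{F}(u) = \int_{I_{\Omega,w}} |u'|^p w\,dx$. It suffices to exhibit a recovery sequence $(u_h)_h \subset C^1([a,b])$ with $u_h \to u$ in $L^p(\Omega,(\wstar_p)^{p-1})$ and $F^1(u_h) \to \int_{I_{\Omega,w}} |u'|^p w\,dx$, since then $\overline{F^1}(u) \leq \liminf_h F^1(u_h) = \overline{F}(u)$, and the chain collapses. I would revisit the construction behind the density statement \eqref{LipdenseW}: choosing $v_h \in C_c^\infty(I_{\Omega,w})$ in place of $C_c^0$ (permitted by density of $C_c^\infty$ in $L^p(I_{\Omega,w},w)$) makes each $\tilde u^{(i)}_h(x) = u((a_i+b_i)/2) - \int_x^{(a_i+b_i)/2} v_h(y)\,dy$ of class $C^\infty$ on $(a_i,b_i)$ and constant on one-sided neighborhoods of $a_i$ and $b_i$, so that the constant extensions on the outer intervals $(a,a_1)$ and $(b_{N_w},b)$ glue smoothly. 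On interior gaps $(b_i,a_{i+1})$ with $b_i<a_{i+1}$ I would replace the linear interpolation by a smooth Hermite-type bridge with vanishing derivatives at both endpoints, yielding a globally $C^1$ function; since this modification is confined to $\Omega\setminus I_{\Omega,w}$ where $\wstar_p\equiv 0$, the $L^p(\Omega,(\wstar_p)^{p-1})$-convergence is untouched, while the extra contribution to $\int |\bar u_h'|^p w\,dx$ is controlled by the $L^1$-integrability of $w$ on the gaps and can be rendered negligible by sharpening the bridge profile as $h\to\infty$.

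The hard part is the touching subcase $b_i = a_{i+1}$ of the second case in the density proof, where the original fix multiplies $u$ itself by $\sqrt[p]{\wstar_p(x)}/\sqrt[p]{\wstar_p(b_i-1/h)}$ and therefore is not automatically $C^1$, since neither $u$ nor $\wstar_p$ needs to be smooth near the degenerate point. My strategy is to keep the same weighted cutoff idea but to apply it to the already smooth function $\tilde u^{(i)}_h$ instead of to $u$ directly, and to further regularize the multiplier $\sqrt[p]{\wstar_p}$ near the touching point by a $C^\infty$ profile preserving the monotonicity and $L^\infty$ bounds of $\wstar_p$; the extra derivative terms produced by the product rule are then controlled exactly as in the computation leading to \eqref{CCCfhhhhvvv}, using $(\wstar_p)' = (\wstar_p)^2/w^{1/(p-1)}$ from Proposition \ref{propw}(iii) together with the boundedness of $u^p(\wstar_p)^{p-1}$ near $b_i$ granted by Corollary \ref{proppp1}(i), so that the total contribution remains uniformly finite. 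Once a $C^1$ approximant with the required convergence is obtained, the Mazur-lemma step from the proof of \eqref{LipdenseW} goes through verbatim because convex combinations of $C^1$ functions are still in $C^1([a,b])$, and we conclude $\overline{F^1}(u) \leq \overline{F}(u)$, which with the monotone chain above forces all four envelopes to agree.
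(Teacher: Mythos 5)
Your monotonicity chain $\overline{F^1}\ge\overline{F^2}\ge\overline{F^3}\ge\overline{F}$ is correct and is also the (implicit) first half of the paper's argument. For the reverse inequality, however, you have taken a much harder road than the paper, and the hard step is not actually closed. The paper's proof is a one-line appeal to approximation by convolution (citing \cite{CC}), and the reason convolution suffices is a structural point your plan misses: since $\overline{F}$ is the \emph{greatest} lower semicontinuous minorant of $F$, to obtain $\overline{F^1}\le\overline{F}$ it is enough to prove $\overline{F^1}(u)\le F(u)$ for $u$ in the \emph{original} domain, i.e.\ for $u\in AC([a,b])$ with $\int_a^b|u'|^p w\,\de x<+\infty$. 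Such a $u$ is already continuous on all of $[a,b]$ with $u'\in L^1((a,b))$, so there are no touching points, no gaps, and no blow-up at the $a_i,b_i$ to contend with; mollification produces the $C^1$ competitors directly. You instead set out to build a $C^1$ recovery sequence for every $u\in{\rm{Dom}}_w$, i.e.\ to redo the entire density construction \eqref{LipdenseW} with smooth building blocks. That is far more than is needed, and it is exactly where the trouble starts.

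The concrete gap is the touching subcase $b_i=a_{i+1}$. The estimate \eqref{CCCfhhhhvvv} is not a soft consequence of monotonicity and $L^\infty$ bounds on the multiplier: it rests on the exact identity \eqref{derwstar}, $(\wstar_{p})'=(\wstar_{p})^2/w^{1/(p-1)}$, which converts the singular term into the telescoping quantity $\int_{b_1-1/h}^{b_1}|(\wstar_{p})'|\,\de x=\wstar_{p}(b_1-\tfrac1h)-\wstar_{p}(b_1)$. A generic $C^\infty$ regularization of $\sqrt[p]{\wstar_{p}}$ ``preserving monotonicity and the $L^\infty$ bounds'' satisfies no such identity, and there is no a priori reason its derivative has finite, let alone uniformly bounded, energy $\int|\psi'|^p w$ across the degenerate point; producing a $C^1$ cutoff descending from $1$ to $0$ near $b_i$ with small $\int|\psi'|^p w$ is a genuine capacity-type lemma (it can be proved, e.g.\ by distributing the transition over many short intervals centred at Lebesgue points where $w$ is small, exploiting $\int w^{-1/(p-1)}=+\infty$), but it is emphatically not ``exactly as in the computation leading to \eqref{CCCfhhhhvvv}''. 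The same objection applies to the assertion that the Hermite bridges on the gaps can be ``rendered negligible by sharpening'': on $\Omega\setminus I_{\Omega,w}$ only $w^{-1/(p-1)}$ fails to be locally integrable, $w$ itself need not vanish there, the bridge must traverse a fixed height difference, and steepening it concentrates rather than reduces $\int|\,\cdot\,'|^p w$. Either supply these two auxiliary lemmas in full, or adopt the paper's reduction and mollify only globally absolutely continuous functions of finite energy.
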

\proof 
As in the proof of Corollary 4.20 in \cite{CC} it suffices to apply the classical argument of approximation by convolution.

\qed

\textsc{Acknowledgments.}
The authors are members of  the Istituto Nazionale di Alta Matematica (INdAM), GNAMPA Gruppo Nazionale per l'Analisi Matematica, la Probabilità e le loro Applicazioni, and are partially supported by the INdAM--GNAMPA 2023 Project \textit{Problemi variazionali degeneri e singolari} and the INdAM--GNAMPA 2024 Project \textit{Pairing e div-curl lemma: estensioni a campi debolmente derivabili e diﬀerenziazione non locale}. Part of this work was undertaken while the first and third authors were visiting Sapienza University and SBAI Department in Rome. They would like to thank these institutions for the support and warm hospitality during the visits. 
\\
This study was carried out within the ''2022SLTHCE - Geometric-Analytic Methods for PDEs and Applications (GAMPA)" project – funded by European Union – Next Generation EU  within the PRIN 2022 program (D.D. 104 - 02/02/2022 Ministero dell’Università e della Ricerca). This manuscript reflects only the authors’ views and opinions and the Ministry cannot be considered responsible for them.
\\

\end{document}